
\documentclass[aos]{imsart}

\RequirePackage{amsthm,amsmath,amsfonts,amssymb}
\RequirePackage[numbers]{natbib}
\RequirePackage[colorlinks,citecolor=blue,urlcolor=blue]{hyperref}%
\RequirePackage{graphicx}%

\startlocaldefs
\theoremstyle{plain}
\newtheorem{prop}{Proposition}
\newtheorem{lem}{Lemma}
\newtheorem{thm}{Theorem}

\theoremstyle{remark}
\newtheorem{defi}{Definition}
\newtheorem{rem}{Remark}
\newtheorem{assu}{Assumption}
\newtheorem{nota}{Notation}
\usepackage{mathrsfs}
\usepackage{bbm}
\usepackage{xcolor}
\usepackage{cleveref}

\newcommand{\EE}{\mathbb{E}}
\newcommand{\NN}{\mathbb{N}}
\newcommand{\PP}{\mathbb{P}}
\newcommand{\RR}{\mathbb{R}}
\newcommand{\VV}{\mathbb{V}}

\newcommand{\cB}{\mathcal{B}}
\newcommand{\cI}{\mathcal{I}}

\newcommand{\cN}{\mathcal{N}}
\newcommand{\cP}{\mathcal{P}}
\newcommand{\cQ}{\mathcal{Q}}

\newcommand{\cX}{\mathcal{X}}
\newcommand{\cY}{\mathcal{Y}}

\newcommand{\sP}{\mathscr{P}}
\newcommand{\sT}{\mathscr{T}}
\newcommand{\sX}{\mathscr{X}}
\newcommand{\sY}{\mathscr{Y}}

\DeclareMathOperator{\Supp}{Supp}

\DeclareMathOperator*{\argmax}{arg\,max}

\let\eps\varepsilon
\let\to\longrightarrow

\newcommand{\indic}{\mathbbm{1}}

\newcommand{\conv}[2][\ ]{\overset{#1}{\underset{#2}{\to}}}
\newcommand{\limtext}[1]{\underset{#1}{\lim}}
\newcommand{\aseq}[2][]{\overset{#1}{\underset{#2}{=}}}

\newcommand{\mbf}[1]{\mathbf{#1}}

\endlocaldefs

\begin{document}

\begin{frontmatter}
\title{Generalized mutual information and their reference priors under Csizar $f$-divergence}%

\begin{aug}
\author{\fnms{Antoine}~\snm{Van Biesbroeck}\ead[label=e1]{antoine.van-biesbroeck@polytechnique.edu}}

\address{CMAP, \'Ecole polytechnique and CEA, Service d'Études M\'ecaniques et Thermiques, France\printead[presep={,\\ }]{e1}}

\end{aug}

\begin{abstract}
{
In Bayesian theory, the role of information is central. The influence exerted by prior information on posterior outcomes often jeopardizes Bayesian studies, due to the potentially subjective nature of the prior choice. In modeling where \emph{a priori} knowledge is lacking, the reference prior theory emerges as a proficient tool. Based on the criterion of mutual information, this theory makes it possible to construct a non-informative prior whose choice can be qualified as objective. In this paper, we contribute to the enrichment of reference prior theory. Indeed, we unveil an original analogy between reference prior theory and Global Sensitivity Analysis, from which we propose a natural generalization of the mutual information definition. Leveraging dissimilarity measures between probability distributions, such as $f$-divergences, we provide a formalized framework for what we term generalized reference priors.
Our main result offers a limit of mutual information, simplifying the definition of reference priors as its maximal arguments. 
This approach opens a new way that facilitates the theoretical derivation of reference priors under constraints or within specific classes. In the absence of constraints, we further prove that the Jeffreys prior maximizes the generalized mutual information considered. 
}

\end{abstract}

\begin{keyword}[class=MSC]
\kwd[Primary ]{62F15}
\kwd[; secondary ]{62A01}
\kwd{62B10}
\end{keyword}

\begin{keyword}
\kwd{Bayesian methods}
\kwd{Prior selection}
\kwd{Jeffreys prior}
\kwd{Sensitivity analysis}
\end{keyword}

\end{frontmatter}

\section{Introduction}

Bayesian theory provides the means for statistical inference over a sampling model through the definition of prior information.
The prior takes the form of a probability distribution that remains to be chosen.
Some easy choices, such as conjugate priors ---when they exist---, 
{allow} efficient computation of \textit{a posteriori} estimates.
However, {as} inferred results are dependent on the chosen prior, 
the subjective nature of 
{such} selection might imperil the trustworthiness of the method.
{Actually}, the notion of information is a statistical concept \cite{Shannon1948} and is thus central in 
Bayesian theory. 
In that sense, Bayes' theorem expresses information transmission from the prior to the observed statistics.

There is consensus that, within a design of experiment declining the benefit of any oracle-sourced knowledge, a proper prior rule belongs within the non informative ones. In this respect,  \citet{Lindley1956} already suggested the maximization of the Shannon's entropy, as a way to choose a prior that would provide the least possible information. %
The same philosophy has been the source of several rules \cite{Kass1996, Datta1996, Berger2008}, which lead to different definitions for non informative and optimal priors. 
{Nevertheless, in practice, their comparison shows that one rule cannot suit all contexts} \cite{Consonni2018, Berger2015, Datta1995, Fuglstad2019}. 
By suggesting maximizing the mutual information, \citet{Bernardo1979} went further 
{since} the resulting non-informative prior 
{must} ---within the model of interest--- favor the knowledge brought by the observations over the prior distribution itself. %
This notion of ``expected utility maximization'' \cite{Beranrdo1979aExp} constitutes what is called reference prior theory, formalized in \cite{Bernardo2005}. The work of \citet{Clarke1994} proves that within its general framework, the reference prior is the one of Jeffreys \cite{Jeffreys1946}, already praised for its property of being invariant by any modeling re-parametrization.

The reference prior theory is nowadays one of the most studied and considered rules to design non informative priors. 
{In practice, the expression of Jeffreys prior is often cumbersome to implement, especially in high-dimensional settings. 
{In such cases, its implementation is therefore not recommended.}
This motivated alternative representations of the reference prior as an limit of priors \cite{Berger2009}.
Also, when the dimension is high, a sequential construction based on a hierarchization of the parameter's space as suggested by \citet{Berger2015} is often favored. This construction consists in deriving what we call a hierarchical version of Jeffreys prior.
On the other hand, 
algorithmic solutions based on variational inference have also been explored to address implementation challenges \cite{Nalisnick2017,GauchyJds2023}.
}

Extensions of the reference prior definition have also been explored (e.g. \cite{Chen2010, Liu2014, LeTriMinh2014}), and some do not necessarily lead to Jeffreys prior \cite{Hashimoto2021, Clarke1997, Gosh2011}. They are all based on the expression of mutual information as an average divergence between the posterior and the prior distributions.

Nevertheless, the reference prior within its original sense has been numerously considered \citep{Berger1996, Clarke1996, Berger2001, Olivera2007}. Its expression within different contexts has been derived and studied, for example, concerning Gaussian process based models \citep{Paulo2005, Gu2018, Mure2019, Mure2021}, Gaussian hierarchical models \citep{Keefe2019}, generalized linear models \citep{Natarajan2000} or general exponential family models \citep{Clarke2010}. Its calculation remains an interest of the literature \citep{Berger2012,Jha2022}, and some works propose a practical implementation of it within a statistical model with simulated \citep{Gu2016,Vollert2019} or even experimental data. In that latter scenario, the considered reference prior can be represented by a Jeffreys prior when the modeling is low-dimensional  \citep{Chen2008, Dandrea2021, VanBiesbroeck2023}, or leverage an approximation approach of a hierarchical Jeffreys prior otherwise \cite{Li2021}, as the one proposed by \citet{Gu2019}.

This paper contributes to the reference prior theory with an original derivation of the mutual information from a Global Sensitivity Analysis (GSA) based viewpoint. %
GSA, whose principle is to measure how the uncertainty of an output is impacted by that of some of its input \citep{Iooss2015}, allows us indeed to provide %
an interpretation of the reference prior as a maximizer of such sensitivity influence that the observations get from the parameter of interest. 
This suggestion leads to defining what we call generalized mutual information, by analogy with global sensitivity indices \citep{DaVeiga2015}. %
It relies on the wide range of existing dissimilarity measures between two probability distributions.
An example is the $f$-divergence subclass \cite{Csiszar1967}, commonly employed as an extension of  Shannon entropy for various purposes in statistics, such as, variational inference \cite{Minka2005,Bach2023}, surrogate model design \cite{Nguyen2009}, PAC Bayesian learning \cite{Picard2022} and Differential Privacy \cite{Mironov2017}.
A study of those divergences within our generalized mutual information is {also} a main contribution of this paper, with the goal of deriving what one is invited to call generalized reference priors.
Pursuing introductory elements we unveiled in \cite{VanBiesbroeckJDS2023}, we go further in this paper with the provision of an accomplished formalization for the generalized reference prior settings, and results based on classical $f$-divergences for sensitivity analysis such as $\alpha$-divergences.
{
Our main result takes the form of {an} limit w.r.t. the number of data of the mutual information. Its analytical expression as a function of the prior permits a simple expression of our reference priors: they are its maximal arguments. It opens the path of easier {theoretical} derivation of reference priors among the ones that satisfy different kinds of constraints, or that belong to some particular classes.}

In the next section, we formalize the usual Bayesian framework that we consider in our work. Our motivation supported by a Global Sensitivity Analysis viewpoint for an enrichment of the mutual information is elucidated in section \ref{sec:Bayesian}. Afterwards, a sub-class of that generalized mutual information is studied in section \ref{sec:motivationGMI} to define and derive what we call their generalized reference priors. 
Eventually, a conclusion of this paper is provided in section \ref{sec:conclusion}.

\newpage

\section{Bayesian framework}\label{sec:Bayesian}

\subsection{Notations and definitions}

In this work, we adopt the notation of conditional kernels for conditional probabilities expressed below. %
Note that their existence and uniqueness is provided by the disintegration theorem (see e.g. \cite{Chang1997}). %
\begin{nota}\label{nota:cond-kernel}
    If $X_1,\,X_2$ are random variables (r.v.) both defined on a probability space $(\Omega,\sP,\PP)$ and taking values respectively in the Borel spaces $(\cX_1,\sX_1)$, $(\cX_2,\sX_2)$, there exists a conditional distribution of $X_1$ to $X_2$: $\PP_{X_1|X_2}:\sX_1\times\cX_2\to[0,1]$ {that} verifies:
        \begin{itemize}
            \item[(i)] $\forall S\in\sX_1$, $x\mapsto\PP_{X_1|X_2=x}(S)$ is measurable;
            \item[(ii)] for $\PP_{X_2}$-a.e. $x$, $S\mapsto\PP_{X_1|X_2=x}(S)$ is a probability measure;
            \item[(iii)] $\forall S\in\sX_1,\,R\in\sX_2,\,\PP(X_1\in S\cap X_2\in R) = \int_{R}\PP_{X_1|X_2=x}(S)d\PP_{X_2}(x)$ where $\PP_{X_2}$ is the distribution of $X_2$.
        \end{itemize}
    It is unique $\PP_{X_2}$-a.s. %
\end{nota}

A statistical model is classically characterized by a collection of parameterized probability measures $(\PP_{Y|T=\theta})_{\theta\in\Theta}$ defined on a measurable space $(\cY ,\sY)$. In practice, $k$ independent realizations of the r.v. $Y$ distributed according to one of the parameterized distributions are observed. Those are denoted $\mbf Y=(Y_1,\dots,Y_k)$, it is a r.v. taking values in $\cY^k$. %

Under the Bayesian viewpoint, $T$ is a random variable taking values in a measurable space $(\Theta,\sT)$ following a prior distribution denoted $\pi$. 
Suppose $T$ and $\mbf Y$ to satisfy Notation \ref{nota:cond-kernel}'s assumptions.
Therefore, the distribution of $\mbf Y$ conditionally to $T$ is $\PP_{\mbf Y|T}=\PP_{Y|T}^{\otimes k}$. %
From those can be respectively defined the posterior distribution $\PP_{T|\mbf Y}$ and the marginal distribution $\PP_{\mbf Y}$ as the a.s. unique ones verifying
    \begin{align}
        &\forall A\in {\sY^{\otimes k}},\, \PP_{\mbf Y}(A) = \int_{\Theta}\PP_{\mbf Y|T=\theta}(A)d\pi(\theta), \\
        & \forall B\in\sT,\, A\in\sY^{\otimes k},\, \int_A\PP_{T|\mbf Y=\mbf y}(B)d\PP_{\mbf Y}(\mbf y) = \int_B \PP_{\mbf Y|T=\theta}(A)d\pi(\theta).\label{eq:formalposterior}
    \end{align}

\subsection{Mutual information}

Within the framework described above, the well-known mutual information \cite{Bernardo1979} issued by the prior $\pi$ and under $k$ observations would be defined as :
\begin{equation}\label{eq:mutaulinforamtiondef}
    I(\pi|k) = \int_{\cY^k}K\!L(\PP_{T|\mbf y}||\pi)d\PP_{\mbf Y}(\mbf y) ,
\end{equation}
with $\PP_{T|\mbf y}$ being a short notation for $\PP_{T|\mbf Y=\mbf y}$, and $K\!L$ denoting the Kullback-Leibler divergence. %
This quantity, built as the measure of the divergence between the posterior and the prior, is interpretable as an indicator of the capacity that the prior has to learn from the observations to issue the posterior.

It is a common and convenient case study to suppose that the model admits a likelihood: there exist density functions $(\ell(\cdot|\theta))_{\theta\in\Theta}$ with respect to  a common measure $\mu$ on $\cY$ such that 
    \begin{equation}
        \forall A\in\sY,\, \PP_{Y|\theta}(A)=\int_{A}\ell(y|\theta)d\mu(y)\ \text{for $\pi$-a.e. $\theta$,}
    \end{equation}
with $\PP_{Y|\theta}$ being a short notation for $\PP_{Y|T=\theta}$. This way $\PP_{\mbf Y}$ and $\PP_{T|\mbf Y}$ respectively admit the densities $p_{\mbf Y}$ with respect to  $\mu^{\otimes k}$ and $p(\cdot|\mbf y)$ with respect to  $\pi$ defined as
    \begin{align}
        &\forall\mbf y\in\cY^k,\,p_{\mbf Y}(\mbf y) = \int_\Theta\prod_{i=1}^k\ell(y_i|\theta)d\pi(\theta) = \int_\Theta\ell_k(\mbf y|\theta)d\pi(\theta),\\
        &\forall\theta\in\Theta,\,\mbf y\in\cY^k,\,p(\theta|\mbf y) = \frac{\ell_k(\mbf y|\theta)}{p_{\mbf Y}(\mbf y)}\ \text{if $p_{\mbf Y}(\mbf y)\ne0$, $p(\theta|\mbf y)=1$ otherwise}.
    \end{align}
This allows us to more conveniently manipulate the mutual information. Also, 
applying Fubini-Lebesgue's theorem gives %
    \begin{equation}\label{eq:mutual-info-fub}
        I(\pi|k) = \int_{\cY^k}K\!L(\PP_{T|\mbf y}||\pi)d\PP_{\mbf Y}(\mbf y) = \int_\Theta K\!L(\PP_{\mbf Y|\theta}||\PP_{\mbf Y})d\pi(\theta).
    \end{equation}
This equality still stands when the integrals above are not finite.
That equation allows an interpretation of the mutual information from a different viewpoint considering the information that the distribution of the observed items would get from the knowledge of the parameter $\theta$. In that sense, as we develop in the next section, this definition of the mutual information may be seen as a global sensitivity index.

\section{Motivation for generalized mutual information}\label{sec:motivationGMI} %

\subsection{A review for the global sensitivity analysis}\label{sec:GSAreview}

Global Sensitivity Analysis constitutes an essential tool for computer experiment based studies.
Whether the point is to verify, to understand or to simplify the system modeling, it consists in quantifying how the uncertainty of the output of a model is influenced by the uncertainty of its inputs \cite{Iooss2015}. %
Since the early work of Sobol' \cite{Sobol1993}, GSA provides a different range of tools to measure statistically how an observed system, denoted $Y$ and supposed to be a function of input variables $Y=\eta(X_1,\dots,X_p)$, is impacted by one or some of the $X_i$ \cite{DaVeiga2021}. In classical GSA settings, the $X_i$'s variables are supposed to follow a known distribution and to be mutually independent.
A general class of such measuring tools is the one of the global sensitivity indices \cite{DaVeiga2015}: considering a dissimilarity measure $D$ between probability distributions, the impact the input $X_i$ has over $Y$ is expressed by
    \begin{equation}\label{eq:GSAindices}
        S_i = \EE_{X_i}[D(\PP_{Y}||\PP_{Y|X_i})] .
    \end{equation}
The choice of the dissimilarity measure $D$ remains open, allowing to cover a wide range of quantities for sensitivity analysis; it can depend on the different expected properties. As an example, setting $D(P||Q)=(\EE_{X\sim P}[X]-\EE_{X'\sim Q}[X'])^2$ gives the un-normalized Sobol' index \cite{Sobol1993}. %

In this work, we focus on the $f$-divergence sub-class of dissimilarity measures \cite{Csiszar1967} expressed as
\begin{equation}
    D_f(\PP_Y||\PP_{Y|X_i}) = \int f\left(\frac{p_Y(y)}{p_{Y|X_i}(y)}\right)p_{Y|X_i}(y)d\mu(y) ,
\end{equation}
where $f:\RR\to\RR$  is usually chosen to be a convex function mapping $1$ to $0$, and $p_Y$, $p_{Y|X_i}$ denoting respectively probability density functions of the distributions of $Y$ and $Y|X_i$ with respect to the reference measure $\mu$. Notice that setting $f=-\log$ leads to $D_f$ being the Kullback-Leibler divergence: $D_{-\log}(P||Q)=K\!L(Q||P)$ for $P,\,Q$ some probability measures.  

Recall now the Bayesian system we introduced in section \ref{sec:Bayesian} and remind the mutual information expression as stated in equation (\ref{eq:mutual-info-fub}):
    \begin{equation}\label{eq:IindexKL}
        I(\pi|k) = \EE_{T\sim\pi}[K\!L(\PP_{\mbf Y|T}||\PP_{\mbf Y})].
    \end{equation}
One can notice that this quantity can be interpreted from the viewpoint of Global Sensitivity Analysis as an index of the form of equation (\ref{eq:GSAindices}), where the Kullback-Leibler divergence is considered as the dissimilarity measure.
{This interpretation is detailed in the following section.}

\subsection{A natural generalization of mutual information}

{
The representation of  mutual information as a global sensitivity index, as stated in section \ref{sec:GSAreview} expands its interpretation.
Under that scope,  our Bayesian study can be conceptualized as if $\mbf Y$ represents the vectorial output of a system, assumed to be a function of $T$, and an additional unknown stochastic variable $\eps$: $\mbf Y=\nu(T,\eps)$. A well-designed Bayesian modeling parameterized by $T$ would be such that the input whose impact on the output is the highest is $T$. This impact is quantified by the mutual information as expressed in equation (\ref{eq:IindexKL}). %
This perspective allows for comparisons of mutual information quantitites derived under different priors, assessing their efficacy in facilitating more objective information transmission compared to others.
}

Additionally, global sensitivity analysis is built from the consideration and the study of different dissimilarity measures, going beyond the Kullback-Leibler divergence. Analogously, we suggest a natural generalization of the mutual information definition:

\begin{defi}[Generalized mutual information]
    Consider the Bayesian framework introduced in section \ref{sec:Bayesian}.
    Consider a dissimilarity measure $D$. The $D$-mutual information of a prior $\pi$ under $k$ observations is 
        \begin{equation}
            I_D(\pi|k) = \EE_{T\sim\pi}[D(\PP_{\mbf Y}||\PP_{\mbf Y|T})].
            \label{eq:defId}
        \end{equation}
\end{defi}

Although there are other suggestions for such general mutual information, their constructions generally start from its first expression, where the divergence considered is between the posterior and prior distributions (equation \ref{eq:mutaulinforamtiondef}). 
{The generalization of this view, supported by the framework of global sensitivity indices, is a contribution of ours.}

The following definition results from the use of $f$-divergences as dissimilarity measures. They constitute the class of generalized mutual information we focus on within this paper.

\begin{defi}[$f$-divergence mutual information]\label{def:Idf}
    The $f$-divergence-mutual information of $\pi$ is defined as 
        \begin{equation}
            I_{D_f}(\pi|k) =\EE_{T\sim\pi}[D_f(\PP_{\mbf Y}||\PP_{\mbf Y|T})] = \int_\Theta\int_{\cY^k}f\left( \frac{p_{\mbf Y}(\mbf y)}{\ell_k(\mbf y|\theta)} \right) \ell_k(\mbf y|\theta)d\mu^{\otimes k}(\mbf y) d\pi(\theta).
        \end{equation}
\end{defi}

Recall that the original mutual information is a particular case of the above definition with $D_f=K\!L$ and $f=-\log$. %

\section{Generalized reference prior under $f$-divergence}\label{sec:genRefP} %

\subsection{Definitions and model assumptions}

The reference prior in the sense of \citet{Bernardo1979} is %
an asymptotic maximal point of the mutual information defined in equation (\ref{eq:mutaulinforamtiondef}). Using the formalization built in \cite{Bernardo2005} for such a notion of asymptotic maximization as a reference, we suggest the following definition for what we call generalized reference priors, i.e. optimal priors within the consideration of our generalized mutual information.

\begin{defi}[Generalized reference prior]\label{def:genref}
    Let $D$ be a dissimilarity measure and $\cP$ a class of priors on $\Theta$. A prior $\pi^\ast\in\cP$ is called a $D$-reference prior over class $\cP$ with rate $\varphi(k)$ if there exists a sequence of compact subsets $(\Theta_i)_{i\in I}$ with $\pi^\ast(\Theta_i)>0$, $\Theta_i\in\Theta$, $\bigcup_{i\in I}\Theta_i=\Theta$ such that
        \begin{equation}\label{eq:defrefpriorsi}
            \forall\pi\in\cP_i,\,\forall i\in I,\, \lim_{k\rightarrow\infty}\varphi(k)(I_D(\pi_i^\ast|k)-I_D(\pi_i|k)) \geq0
        \end{equation}
    where $\cP_i=\{\pi\in\cP,\,\pi(\Theta_i)>0\}$; $\pi_i,\,\pi^\ast_i$ denote the renormalized restrictions to $\Theta_i$ of priors $\pi$ and $\pi^\ast$ respectively;  $\varphi(k)$ is a {positive and}  monotonous function of $k$.
\end{defi}

\begin{rem}
The terminology ``a class of prior" for $\cP$ can seem vague
as the definition of a prior often does not limit itself to probability measures.
Typically, $\sT$ is generally ---and in this work as well--- the Borel $\sigma$-algebra of $\Theta$, and a class of priors over $\Theta$ points out a set of non-negative Radon measures on $\sT$. This way, the term ``renormalized restriction" of $\pi\in\cP$ to $\tilde\Theta$ for some class of priors $\cP$ and some compact subset $\tilde\Theta$ of $\Theta$ has a sense and defines a probability measure.

This way, while we allow our class of priors to include non-informative improper priors when $\Theta$ is not compact, we evaluate the mutual information only to probability measures on compact sets. Indeed $I_D(\pi_i|k)$ from equation (\ref{eq:defrefpriorsi}) is defined as
    \begin{equation}
        I_D(\pi_i|k)= \int_{\Theta_i}D(\PP_{\mbf Y}||\PP_{\mbf Y|T=\theta})d\pi_i(\theta).
    \end{equation}
\end{rem}

This definition matches %
with the literature when the dissimilarity measure is the Kullback-Leibler divergence \cite{Bernardo2005,Mure2018}. Nevertheless, it introduces the definition of an associated rate proposed by ourselves. In fact, our work provides elements showing that such rate exists and may vary as a function of the dissimilarity measure considered. {When it resorts to the original mutual information, the rate is constant.}

The following assumption, which concerns the statistical model, allows us to use fundamental tools for the Bayesian study performed in this work.

\begin{assu}\label{assu:lgolikelihood}
$\Theta$ is an open subset of $\RR^d$ and for $\mu$-almost every $y\in\cY$, $\theta\mapsto\log\ell(y|\theta)$ is twice continuously differentiable with $\nabla^2_\theta\log\ell(y|\theta)$ being negative definite.    
\end{assu}

Under this assumption, the Fisher information matrix defined by $\cI(\theta)=(\cI(\theta)_{i,j})_{i,j=1}^d$ with
    \begin{equation}\label{eq:fisherexpression}
        \cI(\theta)_{i,j}= - \int_\cY[\partial^2_{\theta_i\theta_j}\log\ell(y|\theta)]\, \ell(y|\theta)d\mu(y)
    \end{equation}
is well defined and positive definite. It lets the definition of the Jeffreys prior licit as well; it is the prior whose density with respect to  the Lebesgue measure is $J$ that is proportional to the square root of the determinant of $\cI$: $J(\theta)\propto|\cI(\theta)|^{1/2}$. In this paper, while it is not ambiguous, we make no distinction between the Jeffreys prior and its density.%

An additional assumption ensures ---under a more regular likelihood--- the satisfaction of classical and convenient properties regarding the Fisher information matrix $\cI$
(see e.g. \cite[\S 7.5]{Lehmann1999}). %
To this statement, we denote from now on by  $\|\cdot\|$ the Euclidean norm when applied to a vector in $\RR^d$ and the associated operator norm when applied to a matrix in $\RR^{d\times d}$ (i.e. the largest singular value of the matrix). %
\begin{assu}\label{assu:fisher}
    For any $\hat\theta\in\Theta$, there exists $\delta>0$ such that the quantities
    \begin{equation}
        \EE_\theta\left[\sup_{\theta',\,\|\theta-\theta'\|<\delta} \|\nabla^2_\theta\log\ell(y|\theta')\| \right]\quad\text{and}\quad \EE_\theta\left[\sup_{\theta',\,\|\theta-\theta'\|<\delta}\|\nabla_\theta\log\ell(y|\theta')\|^2\right]
    \end{equation}
    are bounded on the Euclidean ball $B(\hat\theta,\delta)$. 
\end{assu}

Under such assumptions, the asymptotic derivation of the $K\!L$-mutual information provided in \cite{Clarke1994} proves that the Jeffreys prior is the $K\!L$-reference prior over the class of continuous priors (i.e. priors with continuous density with respect to  the Lebesgue measure). In the sense of our definition, its rate is at least constant.

\subsection{Results}

This section focuses on the $f$-divergence-mutual information, for some class of functions $f$ which will be detailed hereafter. It is dedicated to the provision of theoretical results that lead to an expression for $D_f$-reference prior.
First of all let us define a collection of general probability distributions conditional to $T=\theta$, to ease the calculations, and let us define the class of priors we take into account in this work.

\begin{nota}
    The Kolmogorov extension theorem ensures that for any $\theta\in\Theta$, there exists a probability space $(\Omega_\theta,\sP_\theta,\PP_\theta)$ such that for any $k$, and any $B_1,\dots,B_k\in\sY^k$, $\PP_\theta(\mbf Y\in B_1\times\dots B_k)=\PP_{\mbf Y|T=\theta}(B_1\times\dots\times B_k)$.
    We call $\EE_\theta$ the associated expectation.
\end{nota}
\begin{nota}
    Under Assumption \ref{assu:lgolikelihood}, $\Theta$ is an open subset of $\RR^d$. We consider in this work a compact subset $\tilde\Theta$ of $\Theta$. We denote by $\cP$ the class of non-negative Radon measures on $\Theta$ admitting a continuous density with respect to  the Lebesgue measure. $\tilde\cP$ is the class of probability distributions on $\tilde\Theta$ admitting a continuous density with respect to  the Lebesgue measure. To simplify, if $\pi$ is a prior in $\cP$, its density with respect to  the Lebesgue measure is denoted by $\pi$ as well.
\end{nota}

We invite the reader to remark that the upcoming proposition relies on the asymptotic behavior of the ratio $p_{\mbf Y}(\mbf y)/\ell_k(\mbf y|\theta)$ which becomes close to $0$ when $k$ becomes large.
The next assumption formalizes the behavior of the function $f$ in the neighborhood of $0$.

\begin{assu}\label{assu:g1g2}
    $f:(0,+\infty) \to\RR$ {is measurable} and has an asymptotic expansion in the neighborhood of $0$:
    \begin{equation}
        f(x) \aseq{x\rightarrow0^+} g_1(x) + o(g_2(x)),
    \end{equation}
    {for some measurable functions $g_1:(0,+\infty)\to\RR$ and $g_2:(0,+\infty)\to (0,+\infty)$.}
\end{assu}

A first convergence is stated in the proposition below.

\begin{prop}\label{prop:cvproba}
    Suppose Assumptions \labelcref{assu:lgolikelihood,assu:fisher,assu:g1g2}. %
    Consider a prior $\pi\in\tilde\cP$ on $\tilde\Theta$.
    For any $\theta$ in the interior of $\Supp\pi$ there exists $c=c(\theta),\,C$ positive constants such that the following convergence in probability holds:
    \begin{multline}\label{eq:cvinproba}
        \tilde g_2^{C,c}(k^{-d/2})^{-1}\left| f\left(\frac{p_{\mbf Y}(\mbf y)}{\ell_k(\mbf y|\theta)}\right) \right.\\            \left. - g_1\left(k^{-d/2}\pi(\theta)(2\pi)^{d/2}|\cI(\theta)|^{-1/2}\exp\left(\frac{1}{2}S_k^T\cI(\theta)^{-1}S_k \right)\right) \right| %
                \conv[\PP_\theta]{k\rightarrow\infty} 0,
    \end{multline}
    where $\tilde g_2^{C,c}(k^{-d/2})=\sup_{x\in[c,C]} g_2(xk^{-d/2})$, and $S_k$ denotes $\frac{1}{\sqrt{k}}\sum_{i=1}^k\nabla_\theta\log\ell(y_i|\theta)$.
\end{prop}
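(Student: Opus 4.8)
The plan is to establish a Laplace-type (Bernstein--von Mises) approximation of the ratio $Z_k:=p_{\mbf Y}(\mbf y)/\ell_k(\mbf y|\theta)$ and then to push it through $f$ using Assumption~\ref{assu:g1g2}. Writing $L_k(\theta')=\sum_{i=1}^k\log\ell(y_i|\theta')$, one has
\[
Z_k=\int_{\tilde\Theta}\exp\big(L_k(\theta')-L_k(\theta)\big)\,\pi(\theta')\,d\theta'.
\]
First I would Taylor-expand $L_k$ around $\theta$, using $\nabla_\theta L_k(\theta)=\sqrt k\,S_k$ and, by Assumption~\ref{assu:fisher} and the law of large numbers, $\nabla^2_\theta L_k\approx -k\,\cI(\theta)$ uniformly on a small ball about $\theta$. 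The change of variables $\theta'=\theta+u/\sqrt k$ turns this into
\[
Z_k=k^{-d/2}\!\int \exp\!\big(u^TS_k-\tfrac12 u^T\cI(\theta)u+r_k(u)\big)\,\pi(\theta+u/\sqrt k)\,du,
\]
with a remainder $r_k(u)\rightarrow0$. Since $\theta$ is interior to $\Supp\pi$, continuity gives $\pi(\theta+u/\sqrt k)\rightarrow\pi(\theta)$, and completing the square evaluates the Gaussian integral to $(2\pi)^{d/2}|\cI(\theta)|^{-1/2}\exp(\tfrac12 S_k^T\cI(\theta)^{-1}S_k)$. This yields $Z_k=W_k\,(1+o_{\PP_\theta}(1))$, where $W_k:=k^{-d/2}\pi(\theta)(2\pi)^{d/2}|\cI(\theta)|^{-1/2}\exp(\tfrac12 S_k^T\cI(\theta)^{-1}S_k)$ is exactly the argument of $g_1$ in~\eqref{eq:cvinproba}.

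The hard part will be making this approximation rigorous, and this is where Assumptions~\ref{assu:lgolikelihood} and~\ref{assu:fisher} are used. I would split the integral into a central region $\{\|\theta'-\theta\|\le\delta\}$, where the quadratic expansion is valid and dominated convergence applies---the dominating functions coming from the bounds in Assumption~\ref{assu:fisher}---and its complement, whose contribution must be shown to be $o_{\PP_\theta}(k^{-d/2})$. On the complement the negative-definiteness of $\nabla^2_\theta\log\ell$ from Assumption~\ref{assu:lgolikelihood} makes $L_k$ strictly concave, so $L_k(\theta')-L_k(\theta)$ is uniformly very negative away from the unique interior maximum and the tail is negligible. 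The genuinely delicate point is that the score $S_k$ is not asymptotically constant: it only converges in distribution to $\cN(0,\cI(\theta))$, hence is merely tight. I would therefore carry out the localisation and choose the dominating functions on events of probability close to one, using the tightness of $S_k$ to make the radius $\delta$ and the control of $r_k$ uniform in probability.

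Second, I would transfer the approximation through $f$. Because $S_k^T\cI(\theta)^{-1}S_k$ converges in distribution to a $\chi^2_d$ law, the rescaled ratio $k^{d/2}Z_k$ is tight, and since $\exp(\tfrac12 S_k^T\cI(\theta)^{-1}S_k)\ge1$ and $\pi(\theta)>0$ it is bounded below by a positive constant. Hence, given any tolerance, I can fix $0<c=c(\theta)<C$ so that with probability tending to one both $k^{d/2}Z_k$ and $k^{d/2}W_k$ lie in $[c,C]$; on that event $g_2$ at either point is dominated by $\tilde g_2^{C,c}(k^{-d/2})=\sup_{x\in[c,C]}g_2(xk^{-d/2})$. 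Applying Assumption~\ref{assu:g1g2} at the vanishing point $Z_k$ gives $f(Z_k)-g_1(Z_k)=o(g_2(Z_k))$, hence $\tilde g_2^{C,c}(k^{-d/2})^{-1}\big(f(Z_k)-g_1(Z_k)\big)\conv[\PP_\theta]{k\rightarrow\infty}0$. The last step is to replace $g_1(Z_k)$ by $g_1(W_k)$: this is the only place where measurability of $g_1$ does not suffice, and I would close it using $Z_k=W_k(1+o_{\PP_\theta}(1))$ together with the regularity of $g_1$ near $0$ enjoyed by the $f$-divergences of interest (powers and logarithms, for $\alpha$-divergences and the Kullback--Leibler case), for which $g_1(W_k(1+o_{\PP_\theta}(1)))-g_1(W_k)=o(g_2(W_k))$ on the good event. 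The triangle inequality then gives~\eqref{eq:cvinproba}.
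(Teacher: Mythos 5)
Your proposal is correct and follows essentially the same route as the paper's proof: a Laplace/Bernstein--von Mises expansion of $p_{\mbf Y}(\mbf y)/\ell_k(\mbf y|\theta)$ localized on high-probability events built from the tightness of $S_k$ and the law of large numbers for the Hessian, a bracketing of the rescaled ratio in $[c,C]$, and a transfer through $f$ via Assumption~\ref{assu:g1g2}. The one step you flag as requiring extra regularity of $g_1$ (replacing $g_1(Z_k)$ by $g_1(W_k)$) is precisely the step the paper absorbs into its choice of $\nu,\eta$ satisfying $|f(xh+lh)-g_1(xh)|\leq\tilde\eps g_2(xh)$ for $x\in[c,C]$, $|l|<\nu$, $|h|<\eta$ --- i.e.\ a locally uniform version of the asymptotic expansion --- so your observation about where the bare statement of the assumption must be strengthened is accurate.
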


This first asymptotic result, %
which holds under weak assumptions, 
emphasizes the asymptotic link {between two ratios: (i) the marginal over the likelihood on the one hand, (ii) the chosen prior over Jeffreys one on the other hand.}  
Indeed, as the variables within the exponential in equation (\ref{eq:cvinproba}) are asymptotically standard normal, that term might intuitively become independent of the prior. Therefore, only the ratio between $\pi$ and the Jeffreys prior would remain.
We discuss hereafter explicit formulations of the theorem with different examples of function $f$.

\begin{rem}
    Set $f=-\log$. $g_1$ can be chosen as equal to $f$ and $g_2=1$. In that case we obtain the convergence in $\PP_\theta$-probability already stated in \cite{Clarke1990}:
    \begin{equation}
        \log\frac{\ell_k(\mbf y|\theta)}{p_{\mbf Y}(\mbf y)} - \frac{d}{2}\log k + \log\pi(\theta) - \frac{1}{2}\log|\cI(\theta)| +\frac{1}{2}S_k^T\cI(\theta)^{-1}S_k  \conv[\PP_\theta]{k\rightarrow\infty} 0.
    \end{equation}
\end{rem}

\begin{rem}
    Suppose $f$ is such that $g_1(x)=\alpha g_2(x)$ with $\alpha$ being a nonzero constant and $g_2(x)=x^\beta$, $\beta>0$. The proposition gives
    \begin{multline}
        k^{d\beta/2}\left|f\left(\frac{p_{\mbf Y}(\mbf y)}{\ell_k(\mbf y|\theta)}\right)  - \alpha k^{-d\beta/2}\pi(\theta)^\beta(2\pi)^{d\beta/2}|\cI(\theta)|^{-\beta/2}\exp\left(\frac{\beta}{2} S_k^T\cI(\theta)^{-1}S_k \right)  \right|\\
            \conv[\PP_\theta]{k\rightarrow\infty}0.
    \end{multline}
\end{rem}

To go further on the asymptotic of $I_{D_f}(\pi|k)$, let us consider the following regularity assumptions that we require for the establishment of Theorem \ref{thm:refcompact} expressed right after. %

\begin{assu}\label{assu:infeighes}
    There exists $m>0$ such that $\inf_{\theta'\in\tilde\Theta}-x^T\nabla_\theta^2\log\ell(y|\theta')x>m\|x\|^2\,\forall x$ $\PP_\theta$-a.s. for any $\theta\in\tilde\Theta$.
\end{assu}

\begin{assu}\label{assu:gausstailSk}
     The random variables
     \begin{equation}\label{eq:defSk}
        S_k=\frac{1}{\sqrt{k}}\sum_{i=1}^k\nabla_\theta\ell(y_i|\theta)    
     \end{equation}
     have  Gaussian tails: there exist $\xi,K_1>0$ independent of $\theta$ and $k$ such that $\EE_\theta e^{\frac{\xi}{m}\|S_k\|^2}<K_1$ for any $\theta\in\tilde\Theta$ and $k$.
\end{assu}
It is clear that under the above assumption, the r.v. $X_k=\cI(\theta)^{-1/2}S_k$ admits a similar Gaussian tail with constant $\xi$.
Taking advantage of sub-Gaussian variables theory \cite{Vershynin2018}, the next proposition provides a more convenient and explicit condition to assess the  previous assumption.

\begin{prop}\label{prop:sub-gauss}
    Suppose Assumption \ref{assu:lgolikelihood}. If $\nabla_\theta\log\ell(y|\theta)$ admits a Gaussian tail such that for some $\sigma>0$, $\EE_\theta e^{\sigma\|\nabla_\theta\log\ell(y|\theta)\|^2} <2$ for any $\theta\in\tilde\Theta$, then Assumption \ref{assu:gausstailSk} is verified.
\end{prop}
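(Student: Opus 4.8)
The plan is to read Assumption \ref{assu:gausstailSk} as a statement of sub-Gaussian concentration for a normalized sum of independent random vectors, and to deduce it from the single-observation bound by a tensorization-and-rescaling argument, the whole point being to track every constant and check that it depends neither on $\theta$ nor on $k$. Fix $\theta\in\tilde\Theta$ and set $Z_i=\nabla_\theta\log\ell(Y_i|\theta)$. Under $\PP_\theta$ the observations are i.i.d.\ with law $\PP_{Y|T=\theta}$ (since $\PP_{\mbf Y|T}=\PP_{Y|T}^{\otimes k}$), so the $Z_i$ are i.i.d.\ copies of the score $Z=\nabla_\theta\log\ell(Y|\theta)$ and $S_k=k^{-1/2}\sum_i Z_i$. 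Two preliminary facts are needed. First, centering: $\EE_\theta[Z]=0$, the standard score identity obtained by differentiating $\int\ell(y|\theta)d\mu(y)=1$ under the integral, which is legitimate under the smoothness of Assumption \ref{assu:lgolikelihood} together with the exponential integrability furnished by the hypothesis. Centering is indispensable, for otherwise $\EE_\theta\|S_k\|$ grows like $\sqrt k$ and, by Jensen, $\EE_\theta e^{a\|S_k\|^2}\ge e^{ak\|\EE_\theta Z\|^2}$ would diverge. Second, uniform coordinate sub-Gaussianity: since $|Z_j|\le\|Z\|$ we get $\EE_\theta e^{\sigma Z_j^2}\le\EE_\theta e^{\sigma\|Z\|^2}<2$, so by the equivalences collected in \cite{Vershynin2018} each coordinate $Z_j$ is a centered sub-Gaussian scalar with $\|Z_j\|_{\psi_2}\le 1/\sqrt\sigma$, a bound uniform in $j$, in $i$, and in $\theta$.

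The core of the argument is the tensorization of the moment generating function over the $k$ independent summands. For a fixed coordinate $j$, independence across $i$ gives, for every $\lambda$,
\begin{equation*}
  \EE_\theta e^{\lambda S_k^{(j)}}=\prod_{i=1}^k\EE_\theta e^{(\lambda/\sqrt k)Z_{i,j}}\le e^{C\lambda^2/\sigma},
\end{equation*}
where the $k^{-1/2}$ normalization is exactly what cancels the $k$ summands and renders the variance proxy independent of $k$. Thus each $S_k^{(j)}$ is sub-Gaussian with $\psi_2$-norm at most $C'/\sqrt\sigma$ uniformly in $k$ and $\theta$, which yields a square-exponential moment bound $\EE_\theta e^{b(S_k^{(j)})^2}\le 2$ whenever $b\le c\sigma$ for a suitable absolute constant $c$. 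To pass from the coordinates to the full norm I would apply the generalized Hölder inequality with all exponents equal to $d$ (note this requires no independence across coordinates, which indeed fails in general):
\begin{equation*}
  \EE_\theta e^{a\|S_k\|^2}=\EE_\theta\prod_{j=1}^d e^{a(S_k^{(j)})^2}\le\prod_{j=1}^d\Bigl(\EE_\theta e^{ad(S_k^{(j)})^2}\Bigr)^{1/d}.
\end{equation*}
Choosing $a$ so that $ad\le c\sigma$ bounds every factor by $2^{1/d}$, whence $\EE_\theta e^{a\|S_k\|^2}\le 2$. Finally I would set $\xi=am$ and take $K_1$ to be any constant exceeding $2$ (shrinking $a$ marginally makes the inequality strict), which matches the precise form of Assumption \ref{assu:gausstailSk}; since $a$ depends only on $\sigma$ and on the fixed dimension $d$, the constants $\xi,K_1$ are independent of $\theta$ and $k$, as required.

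Every step is classical, so the genuine work is bookkeeping, and I expect the main obstacle to be guaranteeing that no constant secretly depends on $k$ or $\theta$. The $k$-independence hinges entirely on the $k^{-1/2}$ scaling collapsing the per-summand variance proxies, and the $\theta$-independence on the hypothesis supplying a \emph{single} $\sigma$ valid throughout $\tilde\Theta$; both must be carried faithfully through the $\psi_2$-to-MGF conversions of \cite{Vershynin2018}. The secondary technical point is the rigorous justification of the mean-zero score identity, for which the exponential integrability of $Z$ provided by the hypothesis supplies the domination needed to differentiate under the integral.
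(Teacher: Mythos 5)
Your proof is correct and follows essentially the same route as the paper's: center the score, convert the uniform hypothesis $\EE_\theta e^{\sigma\|\nabla_\theta\log\ell\|^2}<2$ into a coordinate-wise MGF bound via the sub-Gaussian equivalences of \cite{Vershynin2018}, tensorize over the $k$ i.i.d.\ summands so that the $k^{-1/2}$ normalization cancels the product and the variance proxy is uniform in $k$ and $\theta$, then recombine into a bound on $\EE_\theta e^{a\|S_k\|^2}$. The only cosmetic difference is the final recombination: the paper uses a union bound on $\PP_\theta(\|S_k\|\geq s)$ over coordinates followed by integration of the tail, whereas you apply the generalized H\"older inequality to $\prod_{j}e^{a(S_k^{(j)})^2}$ --- both are standard and equally valid.
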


The last assumption is the following one. It requires some control on $f$.

\begin{assu}\label{assu:finfty}
    $f$ is locally bounded on $(0,+\infty)$ and $f(x)\aseq{x\rightarrow +\infty}O(x)$. %
\end{assu}

\begin{thm}\label{thm:refcompact}
    Suppose Assumptions %
    \labelcref{assu:lgolikelihood,assu:fisher,assu:g1g2,assu:infeighes}.
    Suppose as well Assumption \ref{assu:finfty} with $g_1=\alpha g_2$, $g_2(x)= x^\beta$, $\alpha\in\RR$, $\beta\in(0,1)$, and Assumption \ref{assu:gausstailSk} with $\xi>\beta/2$. 
    Consider a prior $\pi\in\tilde\cP$ on the compact subset $\tilde\Theta$ of $\Theta$. $k^{d\beta/2}I_{D_f}(\pi|k)$ admits a finite limit when $k\to\infty$:
    \begin{equation}\label{eq:liml}
        \lim_{k\rightarrow\infty} k^{d\beta/2} I_{D_f}(\pi|k) = l(\pi) =
\alpha C_\beta \int_{\tilde\Theta}\pi(\theta)^{1+\beta} |\cI(\theta)|^{-\beta/2}  d\theta ,
    \end{equation}
where $ C_\beta = (2\pi)^{d\beta/2} (1-\beta)^{-d/2}$ and where $I_{D_f}(\pi|k)=\int_{\tilde\Theta}D_f(\PP_{\mbf Y}||\PP_{\mbf Y|T=\theta})\pi(\theta)d\theta$.
\end{thm}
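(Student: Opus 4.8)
The plan is to pass to the limit inside the double integral defining $I_{D_f}(\pi|k)$. Writing $R_k=R_k(\mbf y,\theta)=p_{\mbf Y}(\mbf y)/\ell_k(\mbf y|\theta)$ and using that $\ell_k(\mbf y|\theta)\,d\mu^{\otimes k}(\mbf y)$ is the law of $\mbf Y$ under $\PP_\theta$, Fubini gives
\[
k^{d\beta/2}I_{D_f}(\pi|k)=\int_{\tilde\Theta}\EE_\theta\!\left[k^{d\beta/2}f(R_k)\right]\pi(\theta)\,d\theta .
\]
It then suffices to (i) identify, for $\theta$ in the interior of $\Supp\pi$, the limit of the inner expectation $a_k(\theta):=\EE_\theta[k^{d\beta/2}f(R_k)]$, and (ii) pass the limit through the outer integral over the compact set $\tilde\Theta$ by dominated convergence.

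For (i), I would read off the candidate limit from Proposition \ref{prop:cvproba}. With $g_1=\alpha g_2$ and $g_2(x)=x^\beta$, the Remark following it yields $k^{d\beta/2}f(R_k)-Z_k(\theta)\to 0$ in $\PP_\theta$-probability, where $Z_k(\theta)=\alpha\,\pi(\theta)^\beta(2\pi)^{d\beta/2}|\cI(\theta)|^{-\beta/2}\exp\!\big(\tfrac{\beta}{2}\|X_k\|^2\big)$ and $X_k=\cI(\theta)^{-1/2}S_k$. Under $\PP_\theta$ the score has mean zero and covariance $\cI(\theta)$, so the central limit theorem gives $X_k\Rightarrow\cN(0,\mathrm{Id})$ and $\|X_k\|^2\Rightarrow\chi^2_d$. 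Evaluating the $\chi^2$ moment generating function at $\beta/2<1/2$ gives $\EE_\theta[\exp(\tfrac{\beta}{2}\|X_k\|^2)]\to(1-\beta)^{-d/2}$, so the natural candidate is
\[
a_k(\theta)\longrightarrow\alpha\,(2\pi)^{d\beta/2}(1-\beta)^{-d/2}\,\pi(\theta)^\beta|\cI(\theta)|^{-\beta/2}=\alpha\,C_\beta\,\pi(\theta)^\beta|\cI(\theta)|^{-\beta/2}.
\]

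The crux is to upgrade this convergence in probability to convergence of the expectations, i.e.\ to establish uniform integrability of $\{k^{d\beta/2}f(R_k)\}_k$ under $\PP_\theta$. The key deterministic input is a Laplace-type upper bound for $R_k$: from the uniform strong concavity of Assumption \ref{assu:infeighes}, $\log\ell_k(\mbf y|\theta')-\log\ell_k(\mbf y|\theta)\le\sqrt{k}\,(\theta'-\theta)^\top S_k-\tfrac{km}{2}\|\theta'-\theta\|^2$; completing the square and integrating $\pi(\theta')\,d\theta'$ over $\RR^d$ yields, for all $\mbf y$, the bound $k^{d/2}R_k\le\|\pi\|_\infty(2\pi/m)^{d/2}\exp(\tfrac{1}{2m}\|S_k\|^2)$. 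I would split $\EE_\theta[k^{d\beta/2}|f(R_k)|]$ at a fixed level $\{R_k\le\varepsilon\}\cup\{R_k>\varepsilon\}$. On $\{R_k\le\varepsilon\}$, Assumption \ref{assu:g1g2} gives $|f(x)|\le Cx^\beta$ near $0$, so $k^{d\beta/2}|f(R_k)|\lesssim(k^{d/2}R_k)^\beta$ is dominated by $\exp(\tfrac{\beta}{2m}\|S_k\|^2)$; the Gaussian-tail Assumption \ref{assu:gausstailSk} with $\xi>\beta/2$ then bounds a moment of order strictly above $1$ uniformly in $k$, giving uniform integrability of this part. On $\{R_k>\varepsilon\}$, Assumption \ref{assu:finfty} gives $|f(x)|\le C(1+x)$, reducing the task to controlling $k^{d\beta/2}\PP_\theta(R_k>\varepsilon)$ and $k^{d\beta/2}\EE_\theta[R_k\indic_{R_k>\varepsilon}]$. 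The first is handled by $\PP_\theta(R_k>\varepsilon)\lesssim k^{-\xi d}$ (from the displayed Laplace bound, which forces $\|S_k\|^2\gtrsim md\log k$ on this event, and the Gaussian tail), so that $\xi>\beta/2$ makes $k^{d\beta/2}\PP_\theta(R_k>\varepsilon)\to0$.

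I expect the main obstacle to be the remaining magnitude term $k^{d\beta/2}\EE_\theta[R_k\indic_{R_k>\varepsilon}]$, where $f$ enjoys only linear growth. Here the displayed bound gives $k^{d\beta/2}R_k\le C\,k^{-d(1-\beta)/2}\exp(\tfrac{1}{2m}\|S_k\|^2)$, and a crude estimate would require the full exponential moment $\EE_\theta\exp(\tfrac{1}{2m}\|S_k\|^2)$, whose finiteness needs $\xi>1/2$ rather than the weaker $\xi>\beta/2$; the argument must therefore exploit the restriction $\|S_k\|^2\gtrsim md\log k$ enforced on $\{R_k>\varepsilon\}$, together with the hard constraint $\EE_\theta[R_k]\le 1$, and balance the polynomial gain $k^{-d(1-\beta)/2}$ coming from $\beta<1$ against the truncated tail, rather than bounding the exponential moment outright. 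Once (i) is established, step (ii) follows by dominating $a_k(\theta)\pi(\theta)$ uniformly on $\tilde\Theta$: the same Laplace and Gaussian-tail bounds, together with the continuity (hence boundedness) of $\pi$ and of $|\cI|^{-\beta/2}$ on the compact $\tilde\Theta$, furnish an integrable dominating function, so dominated convergence yields $\lim_{k\to\infty} k^{d\beta/2}I_{D_f}(\pi|k)=\int_{\tilde\Theta}\alpha C_\beta\,\pi(\theta)^\beta|\cI(\theta)|^{-\beta/2}\pi(\theta)\,d\theta=l(\pi)$.
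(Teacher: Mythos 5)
Your architecture is the same as the paper's: pointwise identification of the limit of $\EE_\theta[k^{d\beta/2}f(R_k)]$ via Proposition~\ref{prop:cvproba}, upgraded to $L^1(\PP_\theta)$ by uniform integrability, followed by a uniform-in-$\theta$ domination and dominated convergence over the compact $\tilde\Theta$. Your treatment of the region $\{R_k\le\varepsilon\}$ (via the Laplace bound $k^{d/2}R_k\le\|\pi\|_\infty(2\pi/m)^{d/2}e^{\|S_k\|^2/(2m)}$ and a $q$-th moment with $q\beta/2<\xi$) is exactly the content and proof of the paper's Lemma~\ref{lem:technicalLemma}, and your bound $k^{d\beta/2}\PP_\theta(R_k>\varepsilon)\to0$ is sound.

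The genuine gap is the term you yourself flag as the main obstacle, $k^{d\beta/2}\EE_\theta[R_k\indic_{R_k>\varepsilon}]$, which you leave at the level of a heuristic ("exploit the restriction $\|S_k\|^2\gtrsim md\log k$, the constraint $\EE_\theta[R_k]\le1$, and balance the polynomial gain against the truncated tail"). As you note, the crude exponential-moment route needs $\xi>1/2$; but the ingredients you propose instead do not close the estimate either: a H\"older split $\EE_\theta[R_k\indic_{E_k}]\le\EE_\theta[R_k^p]^{1/p}\PP_\theta(E_k)^{1/q}$ needs $p\ge1$, while the moment bound of Lemma-1 type is only available for exponents $\zeta<2\xi$, which may be below $1$; and the identity $\EE_\theta[R_k\indic_{E_k}]=\PP_{\mbf Y}(E_k)\le1$ alone gives no decay. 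The paper's Lemma~\ref{lem:lem2} resolves this with a different mechanism: it writes $\EE_\theta[R_k\indic_{E_k}]=\PP_{\mbf Y}(E_k)$, applies Markov's inequality under the \emph{marginal} with the fractional exponent $\tilde\beta=\beta/2<1$, expands $\EE_{\mbf Y}[R_k^{\tilde\beta}]=\int_{\tilde\Theta}\EE_{\tilde\theta}[R_k^{\tilde\beta}(\ell_k(\cdot|\tilde\theta)/\ell_k(\cdot|\theta))^{\tilde\beta}]\pi(\tilde\theta)d\tilde\theta$, and then uses H\"older (with $p=q=2$) together with the likelihood-ratio Taylor expansion and the Gaussian tail to obtain $O(k^{-d(\tilde\beta+1/2)/2})=o(k^{-d\beta/2})$ since $\beta<1$. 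Without this (or an equivalent) argument, both your uniform integrability on $\{R_k>\varepsilon\}$ and your final uniform domination over $\tilde\Theta$ (which in the paper also rests on Lemma~\ref{lem:lem2}) are unproven, so the proposal as written does not establish the theorem.
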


Theorem~\ref{thm:refcompact} expresses an limit w.r.t. $k$ of the $D_f$-generalized mutual information when the parameter's space is compact. This result is central for the derivation of reference priors within our frameworks. Indeed, to satisfy the $D_f$-reference prior definition (Definition~\ref{def:genref}), a prior only requires to maximize this limit on its class. In other words, for $\pi_1$ and $\pi_2$ two priors that belong to a certain class of priors, $\pi_1$ induces a more objective modeling than $\pi_2$ if $l(\pi_1)>l(\pi_2)$. We summarize the impact of this result in the remark below.

\begin{rem}
    Under Theorem~\ref{thm:refcompact}'s assumptions, a prior $\pi^\ast$ on $\tilde\Theta$ is a $D_f$-reference prior over class $\cQ$ in the sense of Definition~\ref{def:genref} if and only if it maximizes $l$ over $\cQ$, with $l$ defined in equation (\ref{eq:liml}). Additionally, if $\cQ$ is a convex class, then $\pi^\ast$ is unique.
\end{rem}

When the class $\tilde\cP$ {is not constrained}, for instance, this maximal argument is Jeffreys prior, as the following proposition indicates.

\begin{prop}\label{prop:jeffcompact}
Under the assumptions of Theorem~\ref{thm:refcompact}, if $\alpha<0$, then
    \begin{equation}\label{eq:thmI-Jgeqpi}
        \lim_{k\rightarrow\infty}k^{d\beta/2}(I_{D_f}(J|k)-I_{D_f}(\pi|k))\geq 0 ,
    \end{equation}
where $J(\theta)=|\cI(\theta)|^{1/2}/\int_{\tilde\Theta}|\cI(\tilde\theta)|^{1/2}d\tilde\theta$ denotes the Jeffreys prior. The equality stands if and only if $\pi=J$.
\end{prop}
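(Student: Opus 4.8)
The plan is to show that Proposition~\ref{prop:jeffcompact} reduces, via Theorem~\ref{thm:refcompact}, to a purely variational statement about the functional $l$, and then to settle that statement by a convexity (Jensen) argument.

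First I would invoke Theorem~\ref{thm:refcompact} to replace the asymptotic quantity by its limit: since $k^{d\beta/2} I_{D_f}(J|k) \to l(J)$ and $k^{d\beta/2} I_{D_f}(\pi|k) \to l(\pi)$, the left-hand side of \eqref{eq:thmI-Jgeqpi} equals $l(J) - l(\pi)$. Writing $\Phi(\pi) = \int_{\tilde\Theta} \pi(\theta)^{1+\beta} |\cI(\theta)|^{-\beta/2}\, d\theta$, we have $l(\pi) = \alpha C_\beta \Phi(\pi)$. Because $\beta \in (0,1)$ we have $C_\beta = (2\pi)^{d\beta/2}(1-\beta)^{-d/2} > 0$, so the hypothesis $\alpha < 0$ gives $\alpha C_\beta < 0$. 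Consequently $l(J) \geq l(\pi)$ is equivalent to $\Phi(J) \leq \Phi(\pi)$, and the whole proposition amounts to showing that $J$ is the unique minimizer of $\Phi$ over $\tilde\cP$.

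Second, I would run the minimization through Jensen's inequality. Note that Assumption~\ref{assu:lgolikelihood} makes $\cI(\theta)$ positive definite, so $|\cI(\theta)|^{1/2}$ is continuous and strictly positive on the compact set $\tilde\Theta$; hence $Z := \int_{\tilde\Theta} |\cI(\theta)|^{1/2}\, d\theta \in (0,\infty)$, the Jeffreys density $J = |\cI|^{1/2}/Z$ is bounded away from $0$, and the ratio $\rho := \pi/J$ is a well-defined nonnegative function with $\int \rho\, dJ = \int \pi\, d\theta = 1$, where $dJ = J\, d\theta$ is a probability measure on $\tilde\Theta$. A direct computation gives $J^{1+\beta} |\cI|^{-\beta/2} = J / Z^{\beta}$, so that $\Phi(\pi) = Z^{-\beta} \int_{\tilde\Theta} \rho^{1+\beta}\, dJ$; in particular $\Phi(J) = Z^{-\beta}$. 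Since $x \mapsto x^{1+\beta}$ is strictly convex on $[0,\infty)$ (as $1+\beta > 1$) and $dJ$ is a probability measure with $\int \rho\, dJ = 1$, Jensen's inequality yields $\int \rho^{1+\beta}\, dJ \geq \left(\int \rho\, dJ\right)^{1+\beta} = 1$, whence $\Phi(\pi) \geq Z^{-\beta} = \Phi(J)$, i.e. $l(J) \geq l(\pi)$.

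Finally, for the equality case I would use the strictness of the convexity: equality in Jensen holds if and only if $\rho$ is $dJ$-almost everywhere constant, and the constraint $\int \rho\, dJ = 1$ forces that constant to be $1$, i.e. $\pi = J$ Lebesgue-a.e.; continuity of both densities then upgrades this to $\pi \equiv J$. I do not expect a genuine obstacle here, since Theorem~\ref{thm:refcompact} has already done the analytic heavy lifting: the only points requiring care are the well-definedness and integrability of $\rho = \pi/J$ (guaranteed by strict positivity of $|\cI|^{1/2}$ and compactness of $\tilde\Theta$) and the passage from almost-everywhere to everywhere equality via continuity. The main subtlety worth double-checking is simply the sign bookkeeping, namely that $\alpha C_\beta < 0$ correctly turns the minimization of $\Phi$ into the maximization of $l$.
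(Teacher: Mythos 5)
Your proof is correct and follows essentially the same route as the paper: pass to the limit functional $l$ via Theorem~\ref{thm:refcompact} and conclude by Jensen's inequality, with the equality case settled by strict convexity. The only (cosmetic) difference is that you apply Jensen to the convex map $x\mapsto x^{1+\beta}$ under the probability measure $J(\theta)\,d\theta$ with the ratio $\pi/J$, whereas the paper applies it to the concave map $x\mapsto\alpha x^{-\beta}$ under $\pi(\theta)\,d\theta$ with the ratio $|\cI|^{1/2}/\pi$; your formulation is, if anything, slightly cleaner where $\pi$ vanishes.
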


\begin{rem}\label{rem:overlinef}
    Under the assumptions of Theorem~\ref{thm:refcompact}, if we consider a function $\overline{f}=f+\gamma$ for some real constant $\gamma$, we notice
        \begin{equation}
            I_{D_{\overline{f}}}(\pi|k) = I_{D_f}(\pi|k) + \gamma
        \end{equation}
    for any prior $\pi$. Therefore, the  statement of Proposition~\ref{prop:jeffcompact} (equation (\ref{eq:thmI-Jgeqpi})) still stands in such a case.
\end{rem}

Theorem~\ref{thm:refcompact} describes the asymptotic behavior of the $D_f$-mutual information under compact sets. With little additional work and the consideration of the remark above, we get the following result.

\begin{thm}\label{thm:mainfinal}
    Under the assumptions of Theorem~\ref{thm:refcompact}, and considering that Assumptions \labelcref{assu:infeighes,assu:gausstailSk} hold for every compact subset $\tilde\Theta$ of $\Theta$, the Jeffreys prior is {the $D_{\overline f}$-reference prior over $\cP$ in the sense of Definition \ref{def:genref}, for any $\overline{f}$ defined as $\overline{f}=f+\gamma$ for some real constant $\gamma$.} It has a rate of $k^{d\beta/2}$. 
\end{thm}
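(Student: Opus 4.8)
The plan is to reduce the statement, which concerns the non-compact parameter space $\Theta$ and the class $\cP$ of possibly improper priors, to the compact situation already settled by Theorem~\ref{thm:refcompact} and Proposition~\ref{prop:jeffcompact}. Since $\Theta$ is an open subset of $\RR^d$ by Assumption~\ref{assu:lgolikelihood}, I would first fix an exhaustion of $\Theta$ by an increasing sequence of compact subsets $(\Theta_i)_{i\in\NN}$ with $\Theta_i\subset\mathrm{int}(\Theta_{i+1})$ and $\bigcup_i\Theta_i=\Theta$ (for instance $\Theta_i=\{\theta\in\Theta:\|\theta\|\le i,\ \mathrm{dist}(\theta,\partial\Theta)\ge 1/i\}$). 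Because the Fisher information is positive definite under Assumption~\ref{assu:lgolikelihood}, the density $|\cI|^{1/2}$ is continuous and strictly positive; hence the Jeffreys measure $dJ=|\cI(\theta)|^{1/2}d\theta$ is a non-negative Radon measure with continuous density, so $J\in\cP$, and each $\Theta_i$ with nonempty interior satisfies $0<J(\Theta_i)<\infty$. Discarding the finitely many empty ones, this furnishes the sequence of compacts demanded by Definition~\ref{def:genref}.

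Second, I would fix $i$ and a prior $\pi\in\cP_i$ and denote by $J_i$, $\pi_i$ the renormalized restrictions of $J$, $\pi$ to $\Theta_i$. The consistency point to record is that $J_i$ is exactly the Jeffreys prior intrinsic to $\Theta_i$ in the sense of Proposition~\ref{prop:jeffcompact}: restricting $|\cI|^{1/2}/\text{const}$ to $\Theta_i$ and renormalizing yields $|\cI(\theta)|^{1/2}/\int_{\Theta_i}|\cI|^{1/2}$. Both $J_i$ and $\pi_i$ lie in $\tilde\cP$ with $\tilde\Theta=\Theta_i$, and since Assumptions~\ref{assu:infeighes} and \ref{assu:gausstailSk} are assumed to hold on every compact subset, all hypotheses of Theorem~\ref{thm:refcompact} and hence of Proposition~\ref{prop:jeffcompact} are met on $\Theta_i$. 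Proposition~\ref{prop:jeffcompact} then gives
\[
\lim_{k\to\infty} k^{d\beta/2}\bigl(I_{D_f}(J_i|k)-I_{D_f}(\pi_i|k)\bigr)\ge 0,
\]
with equality if and only if $\pi_i=J_i$.

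Third, I would pass from $f$ to $\overline f=f+\gamma$. By Remark~\ref{rem:overlinef} one has $I_{D_{\overline f}}(\cdot|k)=I_{D_f}(\cdot|k)+\gamma$, so the additive constant cancels in the difference and the displayed inequality holds verbatim with $I_{D_{\overline f}}$ replacing $I_{D_f}$. Collecting the steps, the defining inequality of Definition~\ref{def:genref} holds for every $\pi\in\cP_i$ and every $i$ with rate $\varphi(k)=k^{d\beta/2}$, which is positive and strictly increasing because $d\beta/2>0$; this establishes that $J$ is a $D_{\overline f}$-reference prior over $\cP$ with rate $k^{d\beta/2}$, and the equality case of Proposition~\ref{prop:jeffcompact} upgrades it to \emph{the} reference prior.

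I do not expect a genuine obstacle: the analytic heavy lifting is entirely contained in Theorem~\ref{thm:refcompact} and Proposition~\ref{prop:jeffcompact}, and what remains is bookkeeping. The only point demanding care is the verification that the renormalized restriction of the global Jeffreys measure to each $\Theta_i$ coincides with the Jeffreys prior intrinsic to $\Theta_i$, so that Proposition~\ref{prop:jeffcompact} genuinely applies with $\pi^\ast_i=J_i$; this consistency under restriction is immediate from the product form $|\cI|^{1/2}/\text{const}$ of the density. I would also explicitly record that, as in Proposition~\ref{prop:jeffcompact}, the sign condition $\alpha<0$ must be in force, since it is precisely what makes the Jeffreys prior the maximizer of the limit $l$ in \eqref{eq:liml} rather than its minimizer.
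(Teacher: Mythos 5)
Your proposal is correct and follows essentially the same route as the paper: exhaust $\Theta$ by compacts, apply Theorem~\ref{thm:refcompact} and Proposition~\ref{prop:jeffcompact} on each restriction, absorb the shift by $\gamma$ via Remark~\ref{rem:overlinef}, and use the equality case for uniqueness. Your explicit remarks on the consistency of the restricted Jeffreys prior and on the need for $\alpha<0$ are sound and merely make explicit what the paper leaves implicit.
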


{
Theorem~\ref{thm:mainfinal} indicates that our generalized mutual information measures still are, in our context, globally maximized by the Jeffreys prior.}

\subsection{Discussions}

\subsubsection{Discussion of the assumptions}

Theorem \ref{thm:mainfinal} extends the preliminary work published in \cite{VanBiesbroeckJDS2023} and recalled in Appendix \ref{app:betaleq0}.  
In \cite{VanBiesbroeckJDS2023} the statement was restricted to $\beta<0$
while Theorem \ref{thm:mainfinal} considers general $f$-divergences with $f$ behaving like an $x^\beta$, $\beta>0$ around $0$. %
This stronger result presents the benefit of including classical dissimilarity measures used in sensitivity analysis, such as $\alpha$-divergence, expressed as $f$-divergence with 
    \begin{equation}
        f(x) = \frac{x^\alpha-\alpha x-(1-\alpha)}{\alpha(\alpha-1)}\quad,\quad \alpha\in(0,1).
    \end{equation}

The assumptions over the likelihood may remain somehow restrictive. However they might still be verified by regular statistical models like the univariate normal model with known variance, or compact models. {Note that due to the asymptotic normality of the r.v. $X_k$ induced by the Central Limit Theorem, the parameter $\xi$ of its assumed Gaussian tail cannot exceed $1/2$. This restricts $\beta$ to be smaller than $1$.}

Furthermore, we would like to point out to the reader the two following remarks, raised from the proofs of our results.
First, the class of prior $\cP$ we take into account might be enlarged to the class of all priors admitting a Lebesgue density that is continuous a.e. and locally bounded.
Second, in opposition with the classical settings for $f$-divergences which tend to suppose the convexity of $f$, the Jeffreys prior optimality within our results is actually implied by the concavity of $x\mapsto f(1/x)$ in the neighborhood of $0$.
This gives some insight to the use of the Chi-Square distance within the mutual information as suggested in \cite{Clarke1997}. Indeed, this dissimilarity measure corresponds to a particular case of our work when $\beta=-1$, making $x\mapsto f(1/x)$ both convex and concave.
A conclusion about the optimum of such generalized
mutual information thus requires stronger regularity assumptions on the likelihood to push
the asymptotic analysis further. In their work, \citet{Clarke1997} have shown in a particular context that
this could lead to the inverse of Jeffreys prior as the optimum.

A global sketch of our proof giving an heuristic of the phenomena is proposed in Appendix~\ref{app:heuristic}. A rigorous proof is given in the next section.\\

\subsubsection{Discussion of the result}

Regarding the optimal characteristic of the Jeffreys prior, we argue that it actually supports  the relevance of our construction, in continuity with the original reference prior framework which we rely on. Indeed, we remind that Jeffreys prior is optimal for $K\!L$-mutual information \cite{Clarke1994}. Although its multivariate derivation is criticized, its univariate form remains robust and appreciated in low-dimensional cases.

For a practical implementation when the dimension is high, one would be invited to refer to hierarchical strategies such as the one proposed by \citet{Berger2015}. 
As these strategies are based on the sequential derivation of reference priors on sequentially conditioned models, they take the form of hierarchically derived low dimensional Jeffreys priors. Consequently, our Theorem~\ref{thm:mainfinal} also supports their methodology.

Additionally, we draw attention to the open-ended nature of the result we express in Theorem~\ref{thm:refcompact}. Indeed, analytic maximization of our function $l$ over a well-chosen class of priors constitutes an easier path to express reference priors that might differ from Jeffreys. %
The following section suggests an example of a constrained problem.

\subsubsection{Discussion of constraints introduction}

Here we illustrate with a simple example an application of Theorem~\ref{thm:refcompact} to the derivation of constrained reference priors.
Consider a Bernoulli modeling: $Y|(T=\theta)\sim\cB(\theta)$, $\cB(\theta)$ being the Bernoulli distribution with parameter $\theta$. We take $\Theta=[0,1]$ and write $\ell(y|\theta)=\theta^{y}(1-\theta)^{1-y}$.

Within this modeling, the Jeffreys prior takes the form of a Beta distribution: $\mathrm{Beta}(1/2,1/2)$ (see e.g. \cite{Robert2007}).

An intuitive type of constraints is the restriction of the moments of the prior. For instance,
we denote $\cP_\EE$ the class of priors $\cP_\EE=\{\pi=\mathrm{Beta}(\lambda_1,\lambda_2),\,\EE_{T\sim\pi}[T]=1/c\}$. A prior in $\cP_\EE$ can be parameterized as $\pi_\lambda = \mathrm{Beta}(\lambda,\lambda(c-1))$, $\lambda\in(0,1)$.
Then, %
solving $\lambda^\ast=\argmax_\lambda l(\pi_\lambda)$ can be carried out numerically or analytically to issue the reference prior $\pi_{\lambda^\ast}$ over $\cP_\EE$.
The same way, some class $\cP_{\VV}$ could be considered this time to fix the variance of the prior.

In Figure \ref{fig:exbeta} are plotted the $D_f$-reference priors over different classes of Beta priors with constrained expectations first, and with constrained variances second. We notice that, when it concerns the class $\cP_\EE$, two priors are reference priors. Actually, the considered class is not convex in this case, which makes the uniqueness not ensured. An additional constraint on the researched reference prior should be set to make it unique. For instance, with the idea of constructing non-informative priors, the one with maximal Shanon's entropy could be chosen. In our example, $\pi_1^\ast$ (Figure \ref{fig:exbeta}.(a)) has a higher entropy than $\pi_2^\ast$ (Figure \ref{fig:exbeta}.(b)).

\begin{figure}
    \centering%
    \includegraphics[width=0.32\linewidth]{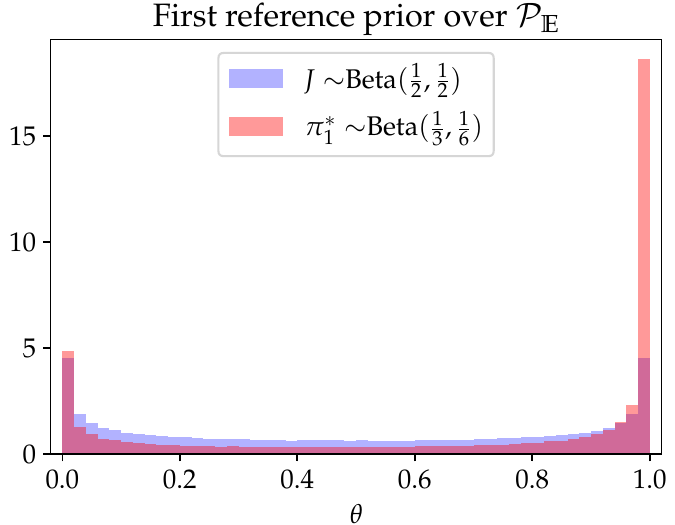}\hspace*{0.01\linewidth}%
    \includegraphics[width=0.32\linewidth]{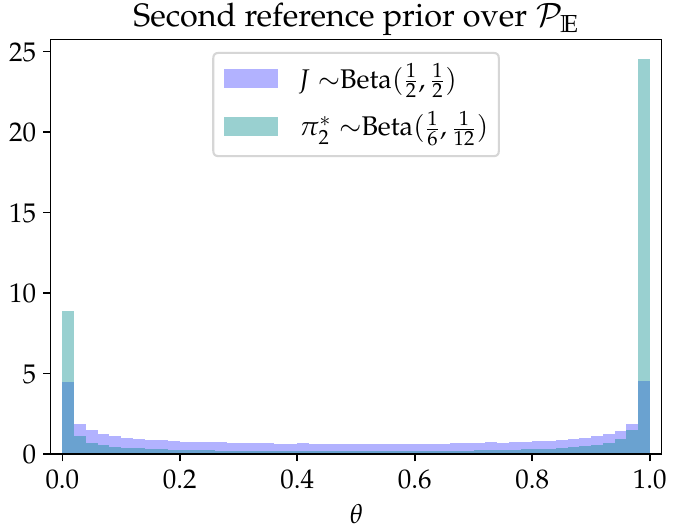}\hspace*{0.01\linewidth}%
    \includegraphics[width=0.32\linewidth]{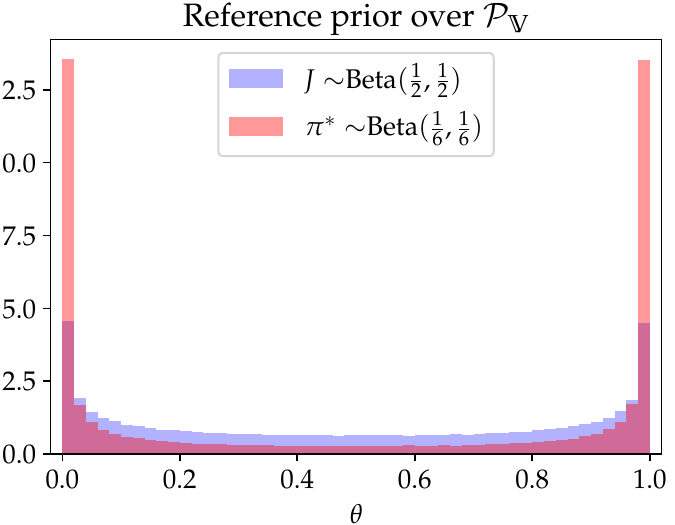}\\
    {\hspace{\stretch{1}}(a)\hspace{\stretch{2}}(b)\hspace{\stretch{2}}(c)\hspace{\stretch{1}}\ }%
    \caption{Comparison of histograms of samples from Jeffreys prior and from the $D_f$-reference priors over a class $\cP_\EE=\{\pi=\mathrm{Beta},\,\EE_{T\sim\pi}T=\frac{2}{3}\}$ (Figures (a) and (b)), and over a class $\cP_\VV=\{\pi=\mathrm{Beta},\,\VV_{T\sim\pi}T=\frac{3}{16}\}$ (Figure (c)). In this example, their exist 2 solutions $\pi_1^\ast$ (in Figure (a)) and $\pi_2^\ast$ (in Figure(b)) of reference priors over $\cP_\EE$. $\beta$ has been set to $1/2$ in this example.}
    \label{fig:exbeta}
\end{figure}

\subsection{Proofs}

\begin{proof}[Proof of Proposition \ref{prop:cvproba}]
    Let $\eps,\tilde\eps>0$ and fix $\theta$ in the interior of $\Supp\pi$. %
First, let us build {four} events which will be asymptotically negligible:
\begin{enumerate}
    \item {Remind the definition of the r.v. $S_k$ given in equation (\ref{eq:defSk}).} The central limit theorem ensures that when $\mbf y\sim\PP_{\mbf Y|T=\theta}$, $S_k$ converges in distribution to $\cN(0,\cI(\theta))$ as $k\to\infty$. Therefore, one can choose $M$ such that the event $A_k$ defined as $A_k=\{\|S_k\|>M\}$ verifies $\PP_\theta(A_k)\conv{k\rightarrow\infty}\eps/5$.
    \item Denote for any $i=1,\dots,k$,
        \begin{equation}
            K_i=\{-x^T\nabla^2_\theta\log\ell(y_i|\theta')x,\,\theta'\in\tilde\Theta,\,\|x\|=1\}.
        \end{equation}
     They are compact subsets of $(0,+\infty)$, thus, the law of large numbers gives 
     \begin{equation}
        \frac{1}{k}\sum_{i=1}^k\inf K_i\conv{k\rightarrow\infty}\EE_\theta\left[\inf_{\theta'\in\tilde\Theta,\,\|x\|=1}-x^T\nabla^2_\theta\log\ell(y_1|\theta')x\right]  ,
     \end{equation}
    which is a positive constant as the infima are positive $\PP_\theta$-a.s. We choose an $\hat m>0$ smaller than this limit and we denote
        \begin{equation}
            B_k=\left\{\frac{1}{k}\sum_{i=1}^k\inf K_i<\hat m\right\}.       
        \end{equation}
     Then $\PP_\theta(B_k)\conv{k\rightarrow\infty}0$.
    \item Denote
        \begin{equation}
            C =\exp\left(M^2/(2\hat m)\right) \quad\text{and}\quad  c = \pi(\theta)(2\pi)^{d/2}|\cI(\theta)|^{-1/2} ,
        \end{equation}
        which are finite and positive. Using the assumed local behavior of $f$ around $0$, one can choose $\nu,\eta>0$ such that for any $c<|x|<C$, $|l|<\nu$, and $|h|<\eta$, 
        \begin{equation}
            |f(xh+lh)-g_1(xh)|\leq\tilde\eps g_2(xh)\leq\tilde\eps\tilde g_2^{C,c}(h).    
        \end{equation}
        Using the $\PP_\theta$-a.s. convergence of $-\frac{1}{k}\sum_{i=1}^k\nabla^2_\theta\log\ell(y_i|\theta)$ to $\cI(\theta)$, we have $\PP_\theta(C_k)\conv{k\rightarrow\infty}0$ where
        \begin{align}
        \nonumber
            C_k = &\left\{\forall \|x\|\leq M,\, \left|\pi(\theta)(2\pi)^{d/2}|k\cI(\theta)|^{-1/2}\exp\left(\frac{1}{2}x^T \cI(\theta)^{-1}x\right) \right.\right. \\
            & \left.\left. - \pi(\theta)(2\pi)^{d/2}|k\hat\cI_k(\theta)|^{-1/2}\exp\left(\frac{1}{2}x^T \hat\cI_k(\theta)^{-1} x\right) \right|\geq\nu k^{-d/2}\right\} ,
        \end{align}
        with $\hat\cI_k(\theta')$ denoting $-\frac{1}{k}\sum_{i=1}^k\nabla^2_\theta\log\ell(y_i|\theta')$.
    \item Let us choose $D>0$ large enough to have 
        \begin{equation}
            \sup_{\tilde\Theta}|\pi| \int_{\|x\|>D} e^{Mx}e^{-\frac{\hat m}{2}x^2} dx<\nu ,    
        \end{equation}
    and $u>0$ such that $\forall|x|<uD^2,\,|e^{x}-1|<\nu/(2\sup_{\tilde\theta}\pi CD^d)$. 
    
    From Assumption \ref{assu:fisher}, the function $\theta'\mapsto\|\nabla_\theta^2\log\ell(y|\theta')-\nabla^2_\theta\log\ell(y|\theta)\|$ is dominated on the Euclidean ball $B(\theta,\delta)$ by a variable that is integrable under $\PP_\theta$. Therefore the function
        \begin{equation}
            \theta'\mapsto \EE_\theta \left[\|\nabla_\theta^2\log\ell(y|\theta')-\nabla^2_\theta\log\ell(y|\theta)\|\right]
        \end{equation}
        converges to $0$ when $\theta'$ converges to $\theta$ as a consequence of the continuity of the second order partial derivatives of the log-likelihood.

        Thus, {if we choose $\tilde\delta$ such that the above quantity is smaller than %
        $u\eps$ and if we define $D_k$ as the event}
        \begin{equation}
            D_k=\left\{\sup_{\theta'\in\tilde\Theta,\,\|\theta'-\theta\|<\tilde\delta}\|\hat\cI_k(\theta')-\hat\cI_k(\theta)\|>u \right\}, %
        \end{equation}
        one gets by applying Markov's inequality that $\PP_\theta(D_k)<\eps/5$ for any $k\geq 1$.
\end{enumerate}

Now, notice that for any $\tilde\theta\in\tilde\Theta$, there exists $\xi(\tilde\theta,\theta)$ on the segment between $\theta$ and $\tilde\theta$ such that
\begin{equation}\label{eq:DLlogfraclik}
    \log\left(\frac{\prod_{i=1}^k \ell(y_i|\tilde\theta)}{\prod_{i=1}^k \ell(y_i|\theta)}\right) = (\tilde\theta-\theta)^T\sum_{i=1}^k\nabla_\theta\log \ell(y_i|\theta) +\frac{1}{2}(\tilde\theta-\theta)^T\sum_{i=1}^k\nabla_\theta^2\log \ell(y_i|\xi(\tilde\theta,\theta))(\tilde\theta-\theta).
\end{equation}
Let us write %
$p_{\mbf Y}(\mbf y)/\ell_k(\mbf y|\theta) = \int_\Theta h_{\mbf y}(\tilde\theta,\theta)\pi(\tilde\theta)d\tilde\theta$ where
\begin{equation}\label{eq:hydef}
    h_{\mbf y}(\tilde\theta,\theta) = \exp\left((\tilde\theta-\theta)^T\nabla_\theta\log\ell_k(\mbf y|\theta)\right)  \exp \left( \frac{1}{2}(\tilde\theta-\theta)^T\nabla^2_\theta\log\ell_k(\mbf y|\xi(\tilde\theta,\theta))(\tilde\theta-\theta) \right).
\end{equation}
We can bound $h_{\mbf y}(\tilde\theta,\theta)$ under the event $A_k^c\cap B_k^c\cap C_k^c\cap D_k^c$ as
    \begin{equation}\label{eq:inegh}
        h_{\mbf y}(\tilde\theta,\theta) \leq\exp\left( M\sqrt{k}\|\tilde\theta-\theta\| \right)\exp\left( -\frac{k}{2}\hat m\|\tilde\theta-\theta\|^2 \right)\leq C.
    \end{equation}

{Still working under $A_k^c\cap B_k^c\cap C_k^c\cap D_k^c$, we bound by $\nu k^{-d/2}$ the four quantities enumerated below:} %
\begin{itemize}
    \item Define $\delta_k=Dk^{-1/2}$. %
    Inequality (\ref{eq:inegh}) implies
        \begin{equation}
            \int_{\|\tilde\theta-\theta\|>\delta_k} h_{\mbf y}(\tilde\theta,\theta)\pi(\tilde\theta)d\tilde\theta <\nu k^{-d/2}.
        \end{equation}
    \item As the same inequality is verified by the integrated term, we have
        \begin{equation}
            \int_{\|\tilde\theta-\theta\|>\delta_k} \exp\left((\tilde\theta-\theta)^T\sqrt{k}S_k\right)\exp\left( -\frac{k}{2}(\tilde\theta-\theta)^T\hat\cI_k(\theta)(\tilde\theta-\theta) \right)\pi(\theta)d\tilde\theta <\nu k^{-d/2}.
        \end{equation}
    \item 
    Recalling the continuity of $\pi$, 
    a large enough $k$  allows to state $|\pi(\tilde\theta)-\pi(\theta)|\leq\nu/(2CD^d)$ for any $\|\tilde\theta-\theta\|<\delta_k$. We write:
        \begin{multline}\label{eq:continuityofHatIk}
           \left|\int_{\|\tilde\theta-\theta\|<\delta_k} h_{\mbf y}(\tilde\theta,\theta)\pi(\tilde\theta)d\tilde\theta \right.\\ 
                \left. - \int_{\|\tilde\theta-\theta\|<\delta_k}\exp\left((\tilde\theta-\theta)^T\sqrt{k}S_k\right)\exp\left(-\frac{k}{2}(\tilde\theta-\theta)^T\hat\cI_k(\theta)(\tilde\theta-\theta)\right)\pi(\theta)d\tilde\theta \right|\\ %
                \leq \int_{\|\tilde\theta-\theta\|<\delta_k}|\hat h_{\mbf y}(\tilde\theta,\theta,\xi(\tilde\theta,\theta)) - \hat h_{\mbf y}(\tilde\theta,\theta,\theta) |\pi(\tilde\theta)d\tilde\theta + \int_{\|\tilde\theta-\theta\|<\delta_k}\hat h_{\mbf y}(\tilde\theta,\theta,\theta)|\pi(\tilde\theta)-\pi(\theta)|d\tilde\theta
        \end{multline}
    where $\hat h(\tilde\theta,\theta,\theta')=\exp((\tilde\theta-\theta)^T\sqrt{k}S_k)\exp\left(-\frac{k}{2}(\tilde\theta-\theta)^T\hat\cI_k(\theta')(\tilde\theta-\theta)\right)$. {Notice that for any $\theta',\tilde\theta$ in the Euclidean ball $B(\theta,\delta_k)$,}
        \begin{align}
        \nonumber
            & |\hat h_{\mbf y}(\tilde\theta,\theta,\theta') - \hat h_{\mbf y}(\tilde\theta,\theta,\theta) | \\
               & \leq \hat h_{\mbf y}(\tilde\theta,\theta,\theta')\left|\exp\left( -\frac{k}{2}(\tilde\theta-\theta)^T(\hat\cI_k(\theta')-\hat\cI_k(\theta))(\tilde\theta-\theta) \right)-1\right|.
        \end{align}
    Thus, under $D_k^c$ and while $\delta_k<\tilde\delta$, we find the quantity of equation (\ref{eq:continuityofHatIk}) to be smaller than $\nu k^{-d/2}$.
    \item By recognizing a Gaussian density in the integral below, we get the equality 
    \begin{align}
    \nonumber
        &\int_{\RR^d}\exp((\tilde\theta-\theta)^T\sqrt{k}S_k)\exp\left(\frac{1}{2}(\tilde\theta-\theta)^T\nabla_\theta^2\ell_k(\mbf y|\theta)(\tilde\theta-\theta)\right)\pi(\theta)d\tilde\theta \\
        &=
        \pi(\theta)(2\pi)^{d/2}|k\hat\cI_k(\theta)|^{-1/2}\exp\left(\frac{1}{2}S_k^T\hat\cI_k(\theta)^{-1}S_k\right).
        \label{eq:intRnabla2bisbis}
    \end{align}
    It implies that under $C_k^c$, 
    \begin{equation}
        \left|(\ref{eq:intRnabla2bisbis}) - \pi(\theta)(2\pi)^{d/2}|k\cI(\theta)|^{-1/2}\exp\left(\frac{1}{2}S_k^T\cI(\theta)^{-1}S_k\right)\right|\leq\nu k^{-d/2}.
    \end{equation}
\end{itemize}

Eventually, {we choose $k$ large enough to have %
$k^{-d/2}<\eta/4$, and $\PP_\theta(A_k\cup B_k\cup C_k\cup D_k)<\eps$}. Combining the four upper-bounds elucidated above, we obtain that with $\PP_\theta$-probability larger than $1-\eps$, the following inequality holds:
\begin{align}
\nonumber  
  \left|f\left(\int_\Theta h_{\mbf y}(\tilde\theta,\theta)\pi(\tilde\theta)d\tilde\theta\right) - g_1\left(k^{-d/2} \pi(\theta)(2\pi)^{d/2}|\cI(\theta)|^{-1/2}\exp\left(\frac{1}{2}S_k^T\cI(\theta)^{-1}S_k\right)\right)   \right| \\
\leq \tilde\eps \tilde g_2^{C,c}(k^{-d/2}) .
    \end{align}
This concludes the proof of the desired convergence in $\PP_\theta$-probability.
\end{proof}

\begin{proof}[Proof of Proposition \ref{prop:sub-gauss}]
Under Assumptions \labelcref{assu:lgolikelihood,assu:fisher}, the expectation $\EE_\theta\left[\partial_{\theta_i} \log\ell(y|\theta)\right]$ is null for any $\theta\in\Theta$ and $i=1,\ldots,d$. Therefore, we deduce from \cite[Proposition 2.5.2]{Vershynin2018} that there exists $c>0$ such that, for any $i=1,\ldots,d$ and  for any $t \in \RR$,
$$
\EE_\theta [\exp (t \partial_{\theta_i} \log\ell(y|\theta))]\leq \exp( c^2t^2).
$$
This way, we also have for any $k>0$,
$\EE_\theta [\exp ( t S_{k,i})] \leq \exp( c^2t^2)$ for any $t\in \RR$, and 
$\EE_\theta [\exp ( t |S_{k,i}|)] \leq 2\exp( c^2t^2)$ for any $t\geq 0$. By Markov's inequality this gives for any $s,t\geq0$:
\begin{align*}
\PP_\theta\big(\|S_k\|\geq s\big)& \leq \sum_{i=1}^d \PP_\theta\big(|S_{k,i}|\geq s/d \big) =
\sum_{i=1}^d \PP_\theta\big(\exp(t |S_{k,i}|)\geq \exp(t s/d) \big) 
\\
&\leq 2 d 
\exp(c^2 t^2 - ts/d).
\end{align*}
This holds for any $t \geq 0$ so we can choose $t=s/(2dc^2)$ and we get
$$
\PP_\theta\big(\|S_k\|\geq s\big) \leq 2d \exp\big(-s^2/(4d^2 c^2)\big).
$$
Eventually, if $\tilde\xi<1/(4d^2c^2)$, then we can derive
\begin{align}
\nonumber  
        \EE_\theta [\exp( \tilde\xi\|S_k\|^2)] = \int_0^\infty\PP_\theta(e^{\tilde\xi\|S_k\|^2}\geq u)du&\leq 1+\int_0^\infty 2\tilde\xi s e^{\tilde\xi s^2}\PP(\|S_k\|\geq s)ds \\
          &\leq 1+\int_0^\infty 4d\tilde\xi se^{s^2\left(\tilde\xi-\frac{1}{4d^2c^2}\right)}ds <\infty.
    \end{align}
Thus, $S_k$ admits a Gaussian tail as claimed. 
\end{proof}

\begin{proof}[Proof of Theorem \ref{thm:mainfinal} and of Proposition~\ref{prop:jeffcompact}]
    To prove this theorem, we will use the {two} following Lemmas which are proved later on.
   \begin{lem}\label{lem:technicalLemma}
  For any $\zeta<2\xi$, there exists a constant $K_2>0$ independent of $\theta\in\tilde\Theta$ such that for any $k>0$,
        \begin{equation}
            k^{d\zeta/2}\EE_\theta\left[\left(\frac{p_{\mbf Y}(\mbf y)}{\ell_k(\mbf y|\theta)}\right)^\zeta\right]\leq K_2.
        \end{equation}
\end{lem}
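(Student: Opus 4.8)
The plan is to produce a deterministic, almost-sure upper envelope for $k^{d/2}\,p_{\mbf Y}(\mbf y)/\ell_k(\mbf y|\theta)$ whose sole random ingredient is $\exp(\|S_k\|^2/(2m))$, and then to integrate it against $\PP_\theta$ using the Gaussian-tail Assumption~\ref{assu:gausstailSk}. The crucial difference with Proposition~\ref{prop:cvproba} is that there the Laplace estimate holds only in $\PP_\theta$-probability, whereas here I need a bound valid for \emph{every} $k$ and $\PP_\theta$-a.s.; this is exactly what the uniform curvature control of Assumption~\ref{assu:infeighes} supplies.

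First I would reuse the integral representation from the proof of Proposition~\ref{prop:cvproba}, namely $p_{\mbf Y}(\mbf y)/\ell_k(\mbf y|\theta)=\int_\Theta h_{\mbf y}(\tilde\theta,\theta)\pi(\tilde\theta)d\tilde\theta$ with $h_{\mbf y}$ as in (\ref{eq:hydef}). Writing the first-order term as $(\tilde\theta-\theta)^T\nabla_\theta\log\ell_k(\mbf y|\theta)=\sqrt k\,(\tilde\theta-\theta)^T S_k$ and majorising the Hessian term through Assumption~\ref{assu:infeighes} by $-\tfrac{km}{2}\|\tilde\theta-\theta\|^2$, I obtain $\PP_\theta$-a.s.
\begin{equation}
h_{\mbf y}(\tilde\theta,\theta)\le \exp\Big(\sqrt k\,(\tilde\theta-\theta)^T S_k-\tfrac{km}{2}\|\tilde\theta-\theta\|^2\Big).
\end{equation}
Completing the square in $\tilde\theta-\theta$ rewrites the exponent as $\tfrac{1}{2m}\|S_k\|^2-\tfrac{km}{2}\big\|\tilde\theta-\theta-S_k/(\sqrt k\,m)\big\|^2$. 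Bounding $\pi$ by $\sup_{\tilde\Theta}\pi$ (finite by continuity on the compact $\tilde\Theta$) and enlarging the domain of integration to $\RR^d$, the remaining Gaussian integral equals $(2\pi/(km))^{d/2}$, so that $\PP_\theta$-a.s.
\begin{equation}
k^{d/2}\,\frac{p_{\mbf Y}(\mbf y)}{\ell_k(\mbf y|\theta)}\le \Big(\sup_{\tilde\Theta}\pi\Big)\Big(\tfrac{2\pi}{m}\Big)^{d/2}\exp\Big(\tfrac{1}{2m}\|S_k\|^2\Big).
\end{equation}

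Raising this to the power $\zeta$ and taking $\EE_\theta$, the deterministic prefactor is free of both $\theta$ and $k$, and it only remains to control $\EE_\theta\exp\big(\tfrac{\zeta}{2m}\|S_k\|^2\big)$. Since $\zeta<2\xi$ gives $\tfrac{\zeta}{2m}<\tfrac{\xi}{m}$ and $\|S_k\|^2\ge 0$, the exponential is dominated by $\exp\big(\tfrac{\xi}{m}\|S_k\|^2\big)$, whose expectation is bounded by $K_1$ uniformly in $\theta$ and $k$ by Assumption~\ref{assu:gausstailSk}. Setting $K_2=(\sup_{\tilde\Theta}\pi)^\zeta(2\pi/m)^{d\zeta/2}K_1$ then yields the claim, with a constant manifestly independent of $\theta$ and $k$.

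The main obstacle is the first step: upgrading the merely-in-probability Laplace behaviour of the ratio into an almost-sure majorant that is uniform in $k$. This hinges entirely on Assumption~\ref{assu:infeighes}, which furnishes a quadratic lower bound on $-\nabla_\theta^2\log\ell$ holding simultaneously along the relevant segments $\PP_\theta$-a.s., thereby turning $h_{\mbf y}$ into a genuine Gaussian majorant with explicit variance $1/(km)$; without such a uniform curvature bound the Gaussian integral could fail to be finite. A secondary point deserving care is the $\theta$-uniformity of every constant entering $K_2$, which holds because $m$, $\xi$ and $K_1$ are $\theta$-free by Assumptions~\ref{assu:infeighes} and~\ref{assu:gausstailSk}, while $\sup_{\tilde\Theta}\pi<\infty$ by continuity on the compact set.
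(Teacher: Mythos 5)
Your proof is correct and follows essentially the same route as the paper's: bound the integrand $h_{\mbf y}$ via the uniform curvature of Assumption~\ref{assu:infeighes}, reduce to a Gaussian integral over $\RR^d$ (after bounding $\pi$ by its supremum on the compact $\tilde\Theta$) whose value is $(2\pi/(km))^{d/2}e^{\|S_k\|^2/(2m)}$, and conclude with the Gaussian tail of $S_k$ from Assumption~\ref{assu:gausstailSk} using $\zeta/(2m)<\xi/m$. The only cosmetic difference is that you establish the almost-sure envelope before raising to the power $\zeta$ and integrating, whereas the paper raises the integral representation to the power $\zeta$ first; both implicitly use $\zeta>0$, as does every application of the lemma.
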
    
\begin{lem}\label{lem:lem2}
    There exists a constant $K_3>0$ independent of $\theta\in\tilde\Theta$ %
    such that for any $k>0$ and any $\rho>0$, %
    \begin{equation}
        \EE_\theta\left[ \indic_{E_k} \frac{p_{\mbf Y}(\mbf y)}{\ell_k(\mbf y|\theta)} \right] \leq \rho^{1/2} K_3 k^{-d\beta/2}
    \end{equation}
    with $E_k=\{ {\mbf y} : \frac{p_{\mbf Y}(\mbf y)}{\ell_k(\mbf y|\theta)}>\rho^{-1/\beta}\}$.
\end{lem}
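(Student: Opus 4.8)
The plan is to turn the set‑restriction $\indic_{E_k}$ into a power of the ratio $R_k:=p_{\mbf Y}(\mbf y)/\ell_k(\mbf y|\theta)$, so as to manufacture the factor $\rho^{1/2}$, and then to control the resulting moment of $R_k$ with the sub‑Gaussian estimates already available (Lemma~\ref{lem:technicalLemma}, Assumptions~\ref{assu:infeighes} and \ref{assu:gausstailSk}). The first step is elementary: on $E_k$ one has $\rho R_k^{\beta}>1$, hence
\begin{equation}
\indic_{E_k}\le \rho^{1/2}R_k^{\beta/2}
\end{equation}
holds on all of $\cY^k$ (trivially off $E_k$). Multiplying by $R_k$ and integrating against $\PP_\theta$ reduces the claim to a uniform moment bound
\begin{equation}
\EE_\theta\!\left[R_k^{1+\beta/2}\right]\le K_3\,k^{-d\beta/2},
\end{equation}
because $k^{-d(1+\beta/2)/2}\le k^{-d\beta/2}$ for $k\ge 1$; the prefactor $\rho^{1/2}$ then appears as stated, and one must check that $K_3$ does not depend on $\theta\in\tilde\Theta$.

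The second step supplies the pointwise control of $R_k$. Starting from the second‑order expansion (\ref{eq:DLlogfraclik}) and invoking Assumption~\ref{assu:infeighes}, the integrand $h_{\mbf y}(\tilde\theta,\theta)$ is dominated by the Gaussian kernel $\exp\!\big(\sqrt k(\tilde\theta-\theta)^T S_k-\tfrac{mk}{2}\|\tilde\theta-\theta\|^2\big)$. Completing the square and enlarging the integration set from $\tilde\Theta$ to $\RR^d$ yields
\begin{equation}
R_k\le C_0\,k^{-d/2}\exp\!\left(\tfrac{1}{2m}\|S_k\|^2\right),
\end{equation}
where $C_0$ depends only on $\sup_{\tilde\Theta}\pi$ and $m$. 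This is the same estimate that underlies Proposition~\ref{prop:cvproba}.

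The third step is the moment bound, and I expect it to be the decisive difficulty. Combining the pointwise bound with Assumption~\ref{assu:gausstailSk} controls only the exponential moments $\EE_\theta[\exp(\zeta\|S_k\|^2/(2m))]$ for $\zeta\le 2\xi$, which is precisely the content of Lemma~\ref{lem:technicalLemma}: $k^{d\zeta/2}\EE_\theta[R_k^{\zeta}]\le K_2$ for $\zeta<2\xi$. The obstacle is structural: since the central limit theorem forces $\xi\le 1/2$, one has $2\xi\le 1<1+\beta/2$, so the exponent produced by the indicator trick lies strictly beyond the range where the sub‑Gaussian moments are finite. More generally, writing $\indic_{E_k}\le\rho^{s}R_k^{\beta s}$ shows that one cannot simultaneously hit the prefactor $\rho^{1/2}$ (which needs $s=1/2$) and keep the moment order $1+\beta s$ below $2\xi$. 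Resolving this tension is where the real work lies, and I would attack it through the compactness of $\tilde\Theta$: once $\|S_k\|$ exceeds $m\,\mathrm{diam}(\tilde\Theta)\sqrt k$ the unconstrained maximiser $\theta+S_k/(m\sqrt k)$ of the Gaussian kernel leaves $\tilde\Theta$, so restricting the $\tilde\theta$‑integral to $\tilde\Theta$ should replace the quadratic growth $\exp(\|S_k\|^2/(2m))$ by a much slower, at most linear‑exponential, dependence on $\|S_k\|$, which is integrable against the Gaussian tail for every $\xi>0$.

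In short, extracting $\rho^{1/2}$ and establishing the pointwise Gaussian bound are routine once Proposition~\ref{prop:cvproba} is in hand, and Lemma~\ref{lem:technicalLemma} handles the bulk where $\|S_k\|$ is moderate; the hard part is the large‑deviation regime, where the first power of $R_k$ cannot be reached by the sub‑Gaussian moments alone. Making the compactness truncation quantitative—so that the contribution of $\{\|S_k\|>m\,\mathrm{diam}(\tilde\Theta)\sqrt k\}$ is genuinely controlled at the rate $k^{-d\beta/2}$ and uniformly in $\theta$—is the step on which the whole estimate hinges, and it is the delicate interplay of Assumptions~\ref{assu:infeighes} and \ref{assu:gausstailSk} with $\mathrm{diam}(\tilde\Theta)$ that must be exploited there.
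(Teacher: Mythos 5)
There is a genuine gap, and you have in fact located it yourself: after the pointwise bound $\indic_{E_k}\le\rho^{1/2}R_k^{\beta/2}$ you are left needing $\EE_\theta[R_k^{1+\beta/2}]\lesssim k^{-d\beta/2}$, a moment of order $1+\beta/2>1\ge 2\xi$, which is outside the range where Lemma~\ref{lem:technicalLemma} (or any estimate derivable from Assumption~\ref{assu:gausstailSk}) applies. Your proposed repair via the compactness of $\tilde\Theta$ does not close this. The truncation $\{\|S_k\|>m\,\mathrm{diam}(\tilde\Theta)\sqrt{k}\}$ only linearizes the exponent in the far tail; on the complementary ``bulk'' the bound on $R_k^{1+\beta/2}$ still carries the factor $\exp\bigl((1+\tfrac{\beta}{2})\|S_k\|^2/(2m)\bigr)$, whose integrability against the Gaussian tail requires precisely $1+\beta/2<2\xi$ — the condition that fails. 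To make the bulk integrable you would have to cut at $\|S_k\|\sim\sqrt{\log k}$, leaving an uncontrolled intermediate regime up to $\sqrt{k}$. So the hard step you flag as ``where the real work lies'' is not merely unfinished; the route you sketch for it does not go through.

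The paper avoids the problem entirely by never raising $R_k$ to a power larger than $\beta/2$ under a single expectation. The key observation is the change of measure
\begin{equation}
\EE_\theta\left[\indic_{E_k}\,\frac{p_{\mbf Y}(\mbf y)}{\ell_k(\mbf y|\theta)}\right]=\int_{\cY^k}\indic_{E_k}\,d\PP_{\mbf Y}(\mbf y)=\PP_{\mbf Y}(E_k),
\end{equation}
i.e.\ the first power of $R_k$ is absorbed into the reference measure before any moment bound is attempted. Markov's inequality with exponent $\tilde\beta=\beta/2$ then gives $\PP_{\mbf Y}(E_k)\le\rho^{1/2}\int R_k^{\beta/2}\,d\PP_{\mbf Y}$, which produces your factor $\rho^{1/2}$; disintegrating $\PP_{\mbf Y}=\int\PP_{\mbf Y|\tilde\theta}\,\pi(\tilde\theta)\,d\tilde\theta$ and applying H\"older with $p=q=2$ reduces everything to the moment $\EE_{\tilde\theta}[R_k^{\beta}]$ (handled by Lemma~\ref{lem:technicalLemma}, since $\beta<2\xi$) and to an exponential moment of the likelihood ratio $\ell_k(\cdot|\tilde\theta)/\ell_k(\cdot|\theta)$, controlled by Assumptions~\ref{assu:infeighes} and~\ref{assu:gausstailSk}; the resulting rate $k^{-d(\beta/2+1/2)/2}\le k^{-d\beta/2}$ follows from $\beta<1$. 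Your opening reduction is the natural first thing to try, but without this change-of-measure step the lemma is not reachable from the stated hypotheses.
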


We first state the equi-integrability of r.v. of equation (\ref{eq:cvinproba}): Let $\rho>0$ and choose $A$ such that $\PP_\theta(A)=\rho$. First, denoting $X_k=\cI(\theta)^{-1/2}S_k$, 
\begin{align}\label{eq:equiintpart1}
    \EE_\theta\left[\indic_A \pi(\theta)^\beta(2\pi)^{d\beta/2}e^{\frac{\beta}{2}S_k^T\cI(\theta)^{-1}S_k}\right]  
        &\leq \pi(\theta)^\beta(2\pi)^{d\beta/2} \rho^{1/p} \EE_\theta\left[e^{\frac{\beta}{2}q\|X_k\|^2}\right]^{1/q} \\
        &\leq \rho^{1/p}(\sup_{\Theta}\pi)^\beta (2\pi)^{d\beta/2}K_1^{1/q}\nonumber
\end{align}
for any $p,q$ such that $1/p+1/q=1$ and $q\beta/2<\xi$. %
Second, consider $\hat C$ and $\hat C'$ such that $|f(x)|\leq\hat Cx$ if $x>\rho^{-1/\beta}$ and $|f(x)|\leq \hat C'x^\beta$ otherwise.
Using $E_k=\{ {\mbf y}:p_{\mbf Y}(\mbf y)/\ell_k(\mbf y|\theta)>\rho^{-1/\beta}\}$, we compute
    \begin{align}
\nonumber      
& \EE_\theta\left[ \indic_Ak^{d\beta/2}f\left( \frac{p_{\mbf Y}(\mbf y)}{\ell_k(\mbf y|\theta)}\right) \right]
    \\
\nonumber                  &\leq \EE_\theta\left[ \indic_{E_k}\indic_Ak^{d\beta/2} \hat C  \frac{p_{\mbf Y}(\mbf y)}{\ell_k(\mbf y|\theta)} \right] + \EE_\theta\left[ \indic_{E_k^c}\indic_Ak^{d\beta/2}\hat C'\left( \frac{p_{\mbf Y}(\mbf y)}{\ell_k(\mbf y|\theta)}\right)^\beta \right] \\
 \label{eq:equiintpart2}
                & \leq \hat C\rho^{1/2} K_3 +  \rho^{1/p} K_2^{1/p} , %
\end{align}
where $1/p=1-1/q$ with $q\beta/2<\xi$. Equations (\ref{eq:equiintpart1}) and (\ref{eq:equiintpart2}) ensure the required equi-integrability. %
That makes the convergence {in $\PP_\theta$-probability stated in Proposition \ref{prop:cvproba}} to stand in $L^1(\PP_\theta)$. %

    Thus, we get as $k\rightarrow\infty$
        \begin{align}
            k^{d\beta/2} D_f(\PP_{\mbf Y} || \PP_{\mbf Y|\theta}) = \alpha\pi(\theta)^\beta(2\pi)^{d\beta/2}|\cI(\theta)|^{-\beta/2}\EE_\theta\left[e^{\frac{\beta}{2} \|X_k\|^2}\right] + o(1).    
        \end{align}
    The sequence of r.v. $X_k$ converges in distribution to a standard normal r.v. denoted $X$. %
    This implies that for any $a>0$, the expectation $\EE_\theta\left[e^{\frac{\beta}{2}\|X_k\|^2}\indic_{\|X_k\|\leq a}\right]$ converges to $\EE_\theta\left[e^{\frac{\beta}{2}\|X\|^2}\indic_{\|X\|\leq a}\right]$. Moreover, under Assumption \ref{assu:gausstailSk} there exists a constant $K'$ independent of $k$ such that for any $k$
        \begin{align}\label{eq:expSkindica}
            \EE_\theta\left[ e^{\frac{\beta}{2}\|X_k\|^2}\left|1-\indic_{\|X_k\|\leq a}\right|\right] 
                & \leq \EE_\theta\left[e^{\frac{\beta q}{2}\|X_k\|^2}\right]^{1/q} \PP_\theta\left(\|X_k\|\geq a \right)^{1/p} \\
\nonumber            &\leq K_1^{1/q} \PP_\theta(\exp(t^2 \| X_k\|^2\geq\exp(t^2a) )\\
\nonumber       & \leq K' e^{-t^2 a^2/p} 
        \end{align}
    using H\"older inequality with $1/p+1/q=1$ such as $\beta q/2\leq \xi $ first, and Markov inequality with $t^2<\xi$ second. Thus, for any $\eps>0$, one can choose $a$ {such that $(\ref{eq:expSkindica}) \leq\eps/3$ and $\EE_\theta\left[e^{\frac{\beta}{2}\|X\|^2}\left|1-\indic_{\|X\|\leq a}\right|\right]<\eps/3$ as well to get
    \begin{align}
    \nonumber
    &   \left|\EE_\theta\left[e^{\frac{\beta}{2}\|X_k\|^2}\right] - \EE_\theta\left[e^{\frac{\beta}{2}\|X\|^2}\right]\right| \\
    & < 2\eps/3 + \left|\EE_\theta\left[e^{\frac{\beta}{2}\|X_k\|^2}\indic_{\|X_k\|\leq a}\right] - \EE_\theta\left[e^{\frac{\beta}{2}\|X\|^2}\indic_{\|X\|\leq a}\right]\right|.
    \end{align}
    Eventually, $k$ can be chosen large enough such that the rightt-hand side of above equation is smaller than $\eps$, which states the convergence $\EE_\theta\left[e^{\frac{\beta}{2}\|X_k\|^2}\right] \conv{k\rightarrow\infty} \EE_\theta\left[e^{\frac{\beta}{2}\|X\|^2}\right]$.} This way, 
        \begin{align}
            k^{d\beta/2} D_f(\PP_{\mbf Y} || \PP_{\mbf Y|\theta}) \aseq{k\rightarrow\infty} \alpha\pi(\theta)^\beta(2\pi)^{d\beta/2}|\cI(\theta)|^{-\beta/2}(1-\beta)^{-d/2} + o(1).    
        \end{align}

    Now, our aim is to dominate this term to integrate it with respect to  $\theta$. We consider $\hat C$ and $\hat C'$ such that $|f(x)|\leq\hat Cx$ if $x>1$ and $|f(x)|\leq\hat C'x^\beta$ otherwise. Denote $F_k=\{{\mbf y}: p_{\mbf Y}(\mbf y)/\ell_k(\mbf y|\theta)>1\}$, %
    we write
    \begin{align}
\nonumber       k^{d\beta/2}|D_f(\PP_{\mbf Y}||\PP_{\mbf Y|\theta}) | &\leq k^{d\beta/2}\EE_\theta\left[\indic_{F_k}\hat C\frac{p_{\mbf Y}(\mbf y)}{\ell_k(\mbf y|\theta)}\right] + \EE_\theta\left[\indic_{F_k^c}k^{d\beta/2}\hat C'\left(\frac{p_{\mbf Y}(\mbf y)}{\ell_k(\mbf y|\theta)}\right)^\beta \right] \\
                &\leq\hat CK_3 + \hat C'K_2. %
    \end{align}
The domination above makes the following licit:
    \begin{equation}
        \lim_{k\rightarrow\infty} k^{d\beta/2}I_{D_f}(\pi|k)  = \alpha(2\pi)^{d\beta/2}(1-\beta)^{-d/2} \int_{\tilde\Theta}\pi(\theta)^{\beta+1}|\cI(\theta)|^{-\beta/2}d\theta.
    \end{equation}
That concludes the proof of Theorem~\ref{thm:refcompact}.

Finally, to reach Proposition~\ref{prop:jeffcompact}'s results, call $J(\theta)=|\cI(\theta)|^{1/2}/\int_{\tilde\Theta}|\cI(\tilde\theta)|^{1/2}d\tilde\theta$ and write 
    \begin{multline}
        \lim_{k\rightarrow\infty} k^{d\beta/2}(I_{D_f}(J|k)-I_{D_f}(\pi|k)) = \\
            -\alpha (2\pi)^{d\beta/2}(1 -\beta)^{-d/2} \int_{\tilde\Theta}\pi(\theta)^\beta|\cI(\theta)|^{-\beta/2} \pi(\theta)d\theta\\
            + \alpha(2\pi)^{d\beta/2}(1 -\beta)^{-d/2} \left(\int_{\tilde\Theta} |\cI(\theta)|^{1/2}d\theta\right)^{-\beta} .
    \end{multline}
Notice that $\alpha$ being negative implies the concavity of $x\mapsto \alpha x^{-\beta}$, leading to
    \begin{align}
    \nonumber
        &\alpha \int_{\tilde\Theta}\pi(\theta)^\beta |\cI(\theta)|^{-\beta/2}\pi(\theta)d\theta \\
            &\leq \alpha\left(\int_{\tilde\Theta}\pi(\theta)^{-1}|\cI(\theta)|^{1/2}\pi(\theta)d\theta\right)^{-\beta}
            = \alpha \left(\int_{\tilde\Theta} |\cI(\theta)|^{1/2} d\theta\right)^{-\beta} \label{eq:concavinequality}
    \end{align}
and we get $\limtext{k\rightarrow\infty} k^{d\beta/2}(I_{D_f}(J|k)-I_{D_f}(\pi|k)) \geq 0$. Also, as $\alpha<0,\,\beta<1$, the concavity of $x\mapsto\alpha x^{-1/\beta}$ is strict. Therefore, the inequality in equation (\ref{eq:concavinequality}) is an equality if and only if $\pi(\theta)^\beta|\cI(\theta)|^{-\beta/2}$ is constant with respect to  $\theta$, i.e. $\pi=J$.

\end{proof}

\begin{proof}[Proof of Lemma \ref{lem:technicalLemma}]
    We write 
    \begin{align}
    \nonumber
 \EE_\theta\left[\left( \frac{p_{\mbf Y}(\mbf y)}{\ell_k(\mbf y |\theta)} \right)^\zeta\right] 
            &= \EE_\theta \left(\int_{\tilde\Theta} e^{(\tilde\theta-\theta)^T\sqrt{k}S_k} e^{-\frac{k}{2}(\tilde\theta-\theta)^T\hat\cI_k(\hat\theta)(\tilde\theta-\theta)}\pi(\tilde\theta)d\tilde\theta \right)^\zeta \\
\nonumber            &\leq\EE_\theta k^{-d\zeta/2}\left(\int_{\RR^d}  e^{u^TS_k} e^{-\frac{1}{2}u^T\hat\cI_k(\hat\theta)u} du \sup_{\tilde\Theta}|\pi| \right)^\zeta \\
   &\leq\EE_\theta k^{-d\zeta/2}\left(\int_{\RR^d} e^{u^TS_k} e^{-\frac{\|u\|^2}{2}\frac{1}{k}\sum_{i=1}^k\inf_{\tilde\Theta} L^{y_i}} du \sup_{\tilde\Theta}|\pi| \right)^\zeta
    \end{align}
with $\hat\theta$ being a point in $\RR^d$ within the segment between $\theta$ and $\tilde\theta=u/\sqrt{k}+\theta$; with $L^y(\theta)$ being the smallest eigenvalue of $(\nabla^2_\theta\log\ell(y|\theta) )^{1/2}$.
Therefore, $\inf_{\tilde\Theta} L^{y_i}>m$ $\PP_\theta$ a.s. and 
    \begin{align}
        \EE_\theta\left[\left( \frac{p_{\mbf Y}(\mbf y)}{\ell_k(\mbf y |\theta)} \right)^\zeta\right] 
            &\leq k^{-d\zeta/2}K''\EE_\theta\left[m^{-d\zeta/2}e^{\frac{\zeta }{2m}\|S_k\|^2}\right]
    \end{align}    
for some constant $K''>0$. {By considering the Gaussian tail of $S_k$, the proof of the Lemma is complete.}
\end{proof}

\begin{proof}[Proof of Lemma \ref{lem:lem2}]

Denote $E_k=\{p_{\mbf Y}(\mbf y)/\ell_k(\mbf y|\theta)>\rho^{-1/\beta}\}$.
Notice
    \begin{equation}
        \EE_\theta\left[ \indic_{E_k} \frac{p_{\mbf Y}(\mbf y)}{\ell_k(\mbf y|\theta)} \right] = \int_{\cY^k} \indic_{E_k} d\PP_{\mbf Y}(\mbf y).
    \end{equation}
According to Markov's inequality,
    \begin{align}
\nonumber        \EE_\theta\left[ \indic_{E_k} \frac{p_{\mbf Y}(\mbf y)}{\ell_k(\mbf y|\theta)} \right]
            &\leq \rho^{\tilde\beta/\beta} \int_{\cY^k}\left( \frac{p_{\mbf Y}(\mbf y)}{\ell_k(\mbf y|\theta)} \right)^{\tilde\beta} p_{\mbf Y}(\mbf y)d\mu^{\otimes k}(\mbf y) \\
\nonumber
&\leq \rho^{\tilde\beta/\beta} \int_{\cY^k} \left( \frac{p_{\mbf Y}(\mbf y)}{\ell_k(\mbf y|\theta)} \right)^{\tilde\beta} \int_{\tilde\Theta}\ell_k(\mbf y|\tilde\theta)\pi(\tilde\theta) d\tilde\theta d\mu^{\otimes k}(\mbf y)\\
            &\leq \rho^{\tilde\beta/\beta} \int_{\tilde\Theta} \EE_{\tilde\theta}\left[ \left( \frac{p_{\mbf Y}(\mbf y)}{\ell_k(\mbf y|\tilde\theta)} \right)^{\tilde\beta} \left( \frac{\ell_k(\mbf y|\tilde\theta)}{\ell_k(\mbf y|\theta)} \right)^{\tilde\beta} \right] \pi(\tilde\theta)d\tilde\theta  ,
    \end{align}
for some $\tilde\beta>0$ that we determine later. 
{Using the expansion of the likelihood quotient expressed in equation (\ref{eq:DLlogfraclik})} we can write, for some $\hat\theta$ in the segment between $\theta$ and $\tilde\theta$:
    \begin{align}
    \nonumber
         \EE_\theta\left[ \indic_{E_k} \frac{p_{\mbf Y}(\mbf y)}{\ell_k(\mbf y|\theta)} \right]
             &\leq \rho^{\tilde\beta/\beta}\int_{\tilde\Theta} \EE_{\tilde\theta}\left[ \left( \frac{p_{\mbf Y}(\mbf y)}{\ell_k(\mbf y|\tilde\theta)} \right)^{\tilde\beta}  e^{{\tilde\beta}(\tilde\theta-\theta)^T\sqrt{k}S_k} e^{-\frac{k}{2}{\tilde\beta}(\tilde\theta-\theta)^T\hat\cI_k(\hat\theta)(\tilde\theta-\theta)} \right] \pi(\tilde\theta) d\tilde\theta  \\
        \nonumber
            &\leq \rho^{\tilde\beta/\beta} \left(\int_{\tilde\Theta} \EE_{\tilde\theta} \left[ \left( \frac{p_{\mbf Y}(\mbf y)}{\ell_k(\mbf y|\tilde\theta)} \right)^{p{\tilde\beta}}\right] \pi(\tilde\theta)d\tilde\theta \right)^{1/p}\\
           & \quad\times\left(\int_{\tilde\Theta} \EE_{\tilde\theta}\left[   e^{q{\tilde\beta}(\tilde\theta-\theta)^T\sqrt{k}S_k} e^{-q\frac{k}{2}{\tilde\beta}(\tilde\theta-\theta)^T\hat\cI_k(\hat\theta)(\tilde\theta-\theta)} \right] \pi(\tilde\theta) d\tilde\theta\right)^{1/q},
    \end{align}
the second inequality being a consequence of Hölder's inequality (necessarily $1/p+1/q=1$).
The first integral in the right-hand side of the equation verifies
    \begin{align}
        \int_{\tilde\Theta} \EE_{\tilde\theta} \left[ \left( \frac{p_{\mbf Y}(\mbf y)}{\ell_k(\mbf y|\tilde\theta)} \right)^{p{\tilde\beta}}\right] \pi(\tilde\theta)d\tilde\theta \leq K_2 k^{-p{\tilde\beta} d/2}  
    \end{align}
when ${\tilde\beta} p/2<\xi$ according to Lemma \ref{lem:technicalLemma}.

Now, we look for an upper-bound of the second integral:
    \begin{equation}
  I = \int_{\tilde\Theta}\EE_{\tilde\theta}\left[ e^{q\tilde\beta(\tilde\theta-\theta)^T\sqrt{k}S_k} e^{-q\frac{k}{2}\tilde\beta(\tilde\theta-\theta)^T\hat\cI_k(\hat\theta)(\tilde\theta-\theta)}\right]\pi(\tilde\theta)d\tilde\theta .
    \end{equation}
Beforehand, let $\tilde\theta\in\tilde\Theta$,  $t\in\RR$ and $v\in\RR^d$ with $\|v\|=1$. For any $a >0$, we have 
$|tv^TS_k|\leq \frac{a}{2} t^2 +\frac{1}{2a} |v^TS_k|^2 \leq  \frac{a}{2} t^2 +\frac{1}{2a} \|S_k\|^2 $.
Choosing $a=m/(2\xi)$, we get
\begin{equation}
    \EE_\theta e^{tv^TS_k} \leq 
e^{{m t^2}/(4\xi)} \EE_{\tilde\theta} e^{ ({\xi}/{m}) \|S_k\|^2} \leq K_1 e^{ {t^2c^2}/{2}}
\end{equation}
with $c^2=\xi/(2m)$.
Thus, for any $\tilde\theta\in\tilde\Theta$:
    \begin{equation}
        \EE_{\tilde\theta}\left[ e^{q\tilde\beta(\tilde\theta-\theta)^T\sqrt{k}S_k} e^{-q\frac{k}{2}\tilde\beta(\tilde\theta-\theta)^T\hat\cI_k(\hat\theta)(\tilde\theta-\theta)}\right] \leq K_1 e^{\frac{c^2}{2}q^2\tilde\beta^2k\|\tilde\theta-\theta\|^2} e^{-q\frac{k}{2}\tilde\beta m\|\tilde\theta-\theta\|^2}.
    \end{equation}
Therefore 
\begin{align}
\nonumber
  I
    &\leq K_1k^{-d/2}\int_{\RR^d} e^{\frac{c^2}{2}q^2\tilde\beta^2\|x\|^2} e^{-\frac{q}{2}\tilde\beta m\|x\|^2} \sup_{\tilde\Theta}\pi dx\\
       &\leq K_1k^{-d/2}\int_{\RR^d} e^{\frac{m}{4\xi}q^2\tilde\beta^2\|x\|^2} e^{-\frac{q}{2}\tilde\beta m\|x\|^2} \sup_{\tilde\Theta}\pi dx.
\end{align}
Let us suppose the integral of the right hand term above to be a finite constant that we denote $L$; for this to be true, the inequality $q\tilde\beta<2\xi$ must be satisfied.
In that case, one gets 
\begin{equation}
     \EE_\theta\left[ \indic_{E_k} \frac{p_{\mbf Y}(\mbf y)}{\ell_k(\mbf y|\theta)} \right] \leq \rho^{\tilde\beta/\beta} K_2^{1/p}L^{1/q}K_1^{1/q} k^{-d(\tilde\beta+1/q)/2}. %
\end{equation}

Choose now $\tilde\beta=\beta/2$, and $q=p=2$. Thus $p\tilde\beta/2<\xi$ and $q\tilde\beta/2<\xi$ as required. Moreover, $\tilde\beta+1/q>\beta$ given $1>\beta$. This way, %
    \begin{equation}
        \EE_\theta\left[ \indic_{E_k} \frac{p_{\mbf Y}(\mbf y)}{\ell_k(\mbf y|\theta)} \right] \leq \rho^{1/2} K_3 k^{-d\beta/2},
    \end{equation}
with $K_3 = K_2^{1/2}L^{1/2}K_1^{1/2}$.

\end{proof}

\begin{proof}[Proof of Theorem \ref{thm:mainfinal}]
    Theorem \ref{thm:refcompact} and Proposition \ref{prop:jeffcompact}  {coupled with remark \ref{rem:overlinef}} ensure the Jeffreys prior to be the  {$D_{\overline f}$-reference prior} by choosing any sequence of compact subsets $(\Theta_i)_{i\in I}$ matching the definition.
    To prove its uniqueness, we suppose to have a second reference prior $\pi^\ast\in\cP$ associated with another sequence of compact subsets $(\tilde\Theta_h)_{h\in H}$. Therefore, the equality must be verified within equation (\ref{eq:thmI-Jgeqpi}) for any restriction of $J$ and $\pi^\ast$ to one of the $\tilde\Theta_h$, making them to be equal. Eventually, as $\bigcup_{h\in H}\tilde\Theta_h=\Theta$, we obtain $J=\pi^\ast$, hence the result.
\end{proof}

\section{Conclusion and prospects}\label{sec:conclusion}

Our work contributes to the enrichment of reference prior theory on multiple fronts.
First and foremost, we establish a connection between reference priors and Global Sensitivity Analysis, {shedding light on the interpretation of the mutual information as a sensitivity measure}. Second, by generalizing the definition of mutual information, we propose a framework that suggests a multitude of reference priors corresponding to various dissimilarity measures. {Finally, we provide  an analytical criterion that aids in their definition.}

We interpret reference prior criteria as sensitivity impacts.
That interpretation aligns with purposes of statistical studies that are based on real data.
This perspective enhances trustworthiness of their selection in practice.
In those application contexts, %
{pushing further the robustness of reference priors that are constructed from Jeffreys ---or hierarchical versions of Jeffreys--- supports the objectivity of its choice }
and  of its resulting estimates.
Despite potential computational challenges in certain scenarios, the substantial advantages offered by the objectivity of this method cannot be overlooked.
Also, existing studies have already made strides in handling the numerical computation aspect \cite{Nalisnick2017, GauchyJds2023}. Future works could target such computation on the basis of the generalized mutual information we suggest.

Looking ahead, future research could explore ways such as the investigation of other generalized reference priors beyond those based on $f$-divergences. Those could be derived considering alternative dissimilarity measures inspired by GSA, for instance; or imposing judicious constraints on less general classes of distributions. {Such endeavors could harness the asymptotic expression of generalized mutual information}, catering to reliability applications that prioritize auditability and robustness \cite{Bousquet2023}. These directions hold promise for further advancing the utility and applicability of reference prior theory in Bayesian analysis.

\begin{appendix}
\section{A heuristic for understanding the main result}\label{app:heuristic}

In this Appendix, we present the main heuristic of our proof which leads to the Jeffreys prior as a maximizer of the $D_f$-mutual information.
It is similar to the heuristics of \citet{Clarke1994} or \citet[\S 2]{Mure2018} who derived the reference prior for the original mutual information. It is initiated by the asymptotic expansion
\begin{equation}
    \log\left(\frac{\prod_{i=1}^k \ell(y_i|\tilde\theta)}{\prod_{i=1}^k \ell(y_i|\theta)}\right) = (\tilde\theta-\theta)^T\sum_{i=1}^k\nabla_\theta\log \ell(y_i|\theta) +\frac{1}{2}(\tilde\theta-\theta)^T\nabla_\theta^2\log \ell(y_i|\theta')(\tilde\theta-\theta)
\end{equation}
for every $\theta, \tilde\theta$, and for a $\theta'$ in the segment between $\theta$ and $\tilde\theta$.

We remark that $p_{\mbf Y}(\mbf y)/\ell_k(\mbf y|\theta) = \int_\Theta \frac{\prod_{i=1}^k \ell(y_i|\tilde\theta)}{\prod_{i=1}^k \ell(y_i|\theta)} \pi(\tilde\theta)d\tilde\theta$.
Following the same lines as in the proof of Proposition \ref{prop:cvproba}, we can show that $\frac{1}{k}\sum_{i=1}^k\nabla_\theta^2\log \ell(y_i|\theta')$ is close to $\cI(\theta')$ and we can intuit the concentration of the mass of the integrated function around the neighborhood of $\theta$. This leads to the approximation
\begin{align}
    \frac{p_{\mbf Y}(\mbf y)}{\ell_k(\mbf y|\theta)} 
        &\simeq\pi(\theta)(2\pi)^{d/2}|k\cI(\theta)|^{-1/2}\exp\left(\frac{1}{2k}\nabla_\theta\log\ell_k(\mbf y|\theta)^T\cI(\theta)^{-1}\nabla_\theta\log\ell_k(\mbf y|\theta) \right).
\end{align}

Therefore, under appropriate assumptions, it can be anticipated that
\begin{multline}
    I_{D_f}(\pi|k) \\ \simeq \int_\Theta\int_{\cY^k}f\left[\pi(\theta)(2\pi)^{d/2}|k\cI(\theta)|^{-1/2}\exp\left(\frac{1}{2k}\nabla_\theta\log\ell_k(\mbf y|\theta)^T\cI(\theta)^{-1}\nabla_\theta\log\ell_k(\mbf 
 y|\theta)\right)\right] \\
    \cdot \ell_k(\mbf y|\theta)\pi(\theta)d\mu^{\otimes k}(\mbf y)d\theta.
\end{multline}
Using that $k^{-1/2}\sum_{i=1}^k\nabla_\theta\log\ell(y_i|\theta)$ is asymptotically normal with mean $0$ and variance $\cI(\theta)$ (for $\mbf y$ sampled from $\PP_{\mbf Y|\theta}$), we get:
    \begin{align}
        I_{D_f}(\pi|k) \simeq \int_\Theta\int_{\RR^k} f\left(\pi(\theta)(2\pi)^{d/2}|k\cI(\theta)|^{-1/2}e^{\frac{\|x\|^2}{2}}\right) e^{-\frac{\|x\|^2}{2}} (2\pi)^{-d/2}dx\pi(\theta)d\theta.
    \end{align}
We denote by $\hat I_{D_f}(\pi|k)$ the right hand term.
We use the concavity of $x\mapsto f(1/x)$ to obtain
    \begin{align}
        \hat I_{D_f}(\pi|k) &\leq \int_{\RR^k}f\left(\left(\int_\Theta (2\pi)^{-d/2}|k\cI(\theta)|^{1/2} e^{-\frac{\|x\|^2}{2}}d\theta\right)^{-1}\right)e^{-\frac{\|x\|^2}{2}}(2\pi)^{-d/2}dx %
    \end{align}
and on another hand, denoting $J(\theta) = |k\cI(\theta)|^{1/2}\left(\int_\Theta|k\cI(\theta)|^{1/2}d\theta\right)^{-1}$, we find
    \begin{align}
        \hat I_{D_f}(J|k) = \int_{\RR^k}f\left(\left(\int_\Theta (2\pi)^{-d/2}|k\cI(\theta)|^{1/2} e^{-\frac{\|x\|^2}{2}}d\theta\right)^{-1}\right)e^{-\frac{\|x\|^2}{2}}(2\pi)^{-d/2}dx
    \end{align}
from which we deduce that the Jeffreys prior maximizes asymptotically the $D_f$-mutual information.

\section{Additional elements when $\beta<0$}\label{app:betaleq0}

In this appendix, we propose an extension of the preliminary results obtained in \cite{VanBiesbroeckJDS2023}, in which similar $f$-divergences were considered. The framework in \cite{VanBiesbroeckJDS2023} and in this appendix is, however, different from the one in this paper.
In \cite{VanBiesbroeckJDS2023} and in this appendix, $\beta$ is assumed to belong to $(-1,0)$.

\begin{assu}
\label{assu:JDS}
    There exist $\tau>0$ and $\delta>0$ such that the quantity 
        \begin{equation}
            \EE_\theta\left[\exp\Big(\tau\sup_{\|\tilde\theta-\theta\|<\delta}\|\nabla^2_\theta\log\ell(y|\tilde\theta)\|\Big)\right] 
        \end{equation}
    is continuous with respect to  $\theta$. %
\end{assu}

\begin{thm}\label{thm:JDS}
    Suppose Assumptions \labelcref{assu:lgolikelihood,assu:fisher,assu:JDS}. Assume that $f:(0,+\infty)\to\RR$ is measurable, locally bounded and satisfies the two following conditions:
\begin{align}        f(x)&\aseq{x\rightarrow0^+}  \alpha x^\beta +o(x^\beta) ,\\
        f(x)&\aseq{x\rightarrow +\infty}O(x) ,
    \end{align}
for some $\alpha$, $\beta$, such that $\beta<0$. %
For any prior $\pi$ positive on $\tilde\Theta$ and absolutely continuous with respect to  the Lebesgue measure with continuous and positive Radon–Nikodym derivative denoted by $\pi$ as well, the quantity $k^{d\beta/2}I_{D_f}(\pi|k)$ has a positive limit when $k\to\infty$:
\begin{align}
\label{eq:limitkbeta}
        \lim_{k\rightarrow\infty} k^{d\beta/2} I_{D_f}(\pi|k) = 
\alpha C_\beta \int_{\tilde\Theta}\pi(\theta)^{1+\beta} |\cI(\theta)|^{-\beta/2}  d\theta ,
    \end{align}
where $ C_\beta = (2\pi)^{d\beta/2} (1-\beta)^{-d/2}$. %
Moreover, if $\alpha(\beta+1)>0$, then
    \begin{equation}
        \lim_{k\rightarrow\infty}k^{d\beta/2}(I_{D_f}(J|k)-I_{D_f}(\pi|k))\geq 0 ,
    \end{equation}
where $J(\theta)=|\cI(\theta)|^{1/2}/\int_{\tilde\Theta}|\cI(\tilde\theta)|^{1/2}d\tilde\theta$ denotes the Jeffreys prior. The equality stands if and only if 
{$\pi=J$}.
\end{thm}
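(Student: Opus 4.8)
The plan is to mirror the architecture used for Theorem~\ref{thm:refcompact} and Proposition~\ref{prop:jeffcompact}, adapting each step to the regime $\beta\in(-1,0)$. First I would invoke Proposition~\ref{prop:cvproba} with $g_1(x)=\alpha x^\beta$ and $g_2(x)=x^\beta$. Since $x\mapsto x^\beta$ is decreasing for $\beta<0$, the normalizing factor is $\tilde g_2^{C,c}(k^{-d/2})=c^\beta k^{-d\beta/2}$, so the proposition delivers the convergence in $\PP_\theta$-probability
\begin{equation}
k^{d\beta/2}f\!\left(\frac{p_{\mbf Y}(\mbf y)}{\ell_k(\mbf y|\theta)}\right) - \alpha\,\pi(\theta)^\beta(2\pi)^{d\beta/2}|\cI(\theta)|^{-\beta/2}e^{\frac{\beta}{2}\|X_k\|^2} \conv[\PP_\theta]{k\rightarrow\infty} 0,
\end{equation}
with $X_k=\cI(\theta)^{-1/2}S_k$. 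The decisive structural simplification relative to the $\beta>0$ case is that, since $\beta<0$, the factor $e^{\frac{\beta}{2}\|X_k\|^2}$ now lies in $(0,1]$; being a bounded continuous functional of $X_k$, and since $X_k$ converges in distribution to $X\sim\cN(0,I_d)$, bounded convergence immediately yields $\EE_\theta[e^{\frac{\beta}{2}\|X_k\|^2}]\rightarrow(1-\beta)^{-d/2}$ with no recourse to any Gaussian-tail hypothesis on $X_k$.

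To turn the convergence in probability into a convergence of the expectations, I would establish uniform integrability of $\{k^{d\beta/2}f(p_{\mbf Y}/\ell_k)\}_k$ (the limiting terms being uniformly bounded, hence automatically equi-integrable). Splitting via $|f(x)|\le Cx^\beta$ near $0$ and $|f(x)|\le C'x$ near $+\infty$ (licit under the stated asymptotics and local boundedness of $f$, with local boundedness covering the intermediate range), the large-ratio part is handled as in Lemma~\ref{lem:lem2} using $\EE_\theta[p_{\mbf Y}/\ell_k]=1$ together with the vanishing prefactor $k^{d\beta/2}\rightarrow0$, while the small-ratio part requires bounding, for some $p>1$, the quantity $k^{pd\beta/2}\EE_\theta[(p_{\mbf Y}/\ell_k)^{p\beta}]$ with $p\beta<0$. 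This negative-moment estimate is the analogue of Lemma~\ref{lem:technicalLemma} for a negative exponent, and it is the crux of the argument.

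I expect this negative-moment bound to be the main obstacle. Whereas positive moments of $p_{\mbf Y}/\ell_k$ are dominated by \emph{upper} bounding the Laplace integral $\int_{\tilde\Theta}e^{(\tilde\theta-\theta)^T\sqrt{k}S_k-\frac{k}{2}(\tilde\theta-\theta)^T\hat\cI_k(\hat\theta)(\tilde\theta-\theta)}\pi(\tilde\theta)d\tilde\theta$, a bound on negative moments demands \emph{lower} bounding it. The plan is to restrict the integral to a shrinking ball around $\theta$, substitute $u=\sqrt{k}(\tilde\theta-\theta)$, and extract the Gaussian kernel from the local quadratic expansion of the log-likelihood, the curvature being controlled through the exponential-moment hypothesis on the Hessian (Assumption~\ref{assu:JDS}, which here plays the role Assumption~\ref{assu:gausstailSk} played for $\beta>0$) and the continuity in $\theta$ it grants. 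This yields $p_{\mbf Y}/\ell_k \gtrsim k^{-d/2}\pi(\theta)(2\pi)^{d/2}|\cI(\theta)|^{-1/2}e^{\frac{1}{2}\|X_k\|^2}$, whence for $\zeta<0$ one gets $(p_{\mbf Y}/\ell_k)^{\zeta}\lesssim k^{-d\zeta/2}(\cdots)\,e^{\frac{\zeta}{2}\|X_k\|^2}$; the exponential is now $\le1$, so the estimate is uniform in $k$, and uniform in $\theta\in\tilde\Theta$ thanks to positivity of $\pi$ and the continuity in Assumption~\ref{assu:JDS}. Dominated convergence over the compact $\tilde\Theta$ then produces the announced limit $l(\pi)=\alpha C_\beta\int_{\tilde\Theta}\pi(\theta)^{1+\beta}|\cI(\theta)|^{-\beta/2}d\theta$.

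Finally, the optimality of $J$ I would settle purely at the level of $l$. Writing $J=|\cI|^{1/2}/Z$ with $Z=\int_{\tilde\Theta}|\cI(\theta)|^{1/2}d\theta$ gives $l(J)=\alpha C_\beta Z^{-\beta}$, while for a general $\pi$ one has $\int_{\tilde\Theta}\pi^{1+\beta}|\cI|^{-\beta/2}d\theta=Z\,\EE_{J}\big[(\pi/|\cI|^{1/2})^{1+\beta}\big]$. Since $1+\beta\in(0,1)$, the map $x\mapsto x^{1+\beta}$ is strictly concave, so Jensen's inequality against the probability measure $J$ bounds this by $Z(\EE_J[\pi/|\cI|^{1/2}])^{1+\beta}=Z^{-\beta}$, using $\EE_J[\pi/|\cI|^{1/2}]=1/Z$. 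Because $\beta+1>0$, the hypothesis $\alpha(\beta+1)>0$ is equivalent to $\alpha>0$, hence $\alpha C_\beta>0$ and $l(J)\ge l(\pi)$; strict concavity forces equality exactly when $\pi/|\cI|^{1/2}$ is $J$-a.s.\ constant, that is $\pi=J$.
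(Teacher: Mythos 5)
The paper itself gives no proof of Theorem~\ref{thm:JDS}: it is recalled from the preliminary work \cite{VanBiesbroeckJDS2023}, and only the subsequent extension theorem is proved in Appendix~\ref{app:betaleq0}. So there is no in-paper argument to compare against line by line; judged on its own, your sketch is a sound adaptation of the machinery the paper deploys for $\beta>0$ (Proposition~\ref{prop:cvproba} with $g_1=\alpha g_2$, $g_2(x)=x^\beta$; uniform integrability; domination in $\theta$; a Jensen step for the optimality of $J$), and it correctly identifies the two places where the sign of $\beta$ changes the game: the limiting term $e^{\frac{\beta}{2}\|X_k\|^2}$ becomes bounded, so no Gaussian-tail hypothesis on $S_k$ is needed, while the moment estimate on $p_{\mbf Y}/\ell_k$ flips from an upper bound on the Laplace integral (Lemma~\ref{lem:technicalLemma}) to a lower bound, which is exactly what Assumption~\ref{assu:JDS} is designed to control. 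Your Jensen argument under the measure $J$ with the strictly concave map $x\mapsto x^{1+\beta}$, $1+\beta\in(0,1)$, is a valid reformulation of the concavity argument used in Proposition~\ref{prop:jeffcompact}, and correctly yields the equality case $\pi=J$.

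One intermediate claim is overstated: restricting the integral $\int h_{\mbf y}(\tilde\theta,\theta)\pi(\tilde\theta)d\tilde\theta$ to a ball $\|\tilde\theta-\theta\|\le Rk^{-1/2}$ does not deliver the lower bound $k^{-d/2}\pi(\theta)(2\pi)^{d/2}|\cI(\theta)|^{-1/2}e^{\frac{1}{2}\|X_k\|^2}$ uniformly, because after completing the square the Gaussian bump is centered at a point of norm comparable to $\|S_k\|$ and can escape the ball, so the factor $e^{\frac{1}{2}\|X_k\|^2}$ is not recovered from a local expansion alone. However, your argument does not need that strength: since $\zeta<0$, the crude bound $p_{\mbf Y}/\ell_k\ge k^{-d/2}(\inf_{\tilde\Theta}\pi)\,|B_R|\,e^{-\Lambda_k R^2/2}$ (with $\Lambda_k$ the empirical sup-norm of the Hessian near $\theta$, obtained by bounding $e^{u^TS_k}$ below via Jensen over the ball) already gives $(p_{\mbf Y}/\ell_k)^{\zeta}\lesssim k^{-d\zeta/2}e^{|\zeta|\Lambda_k R^2/2}$, and the exponential moment in Assumption~\ref{assu:JDS} (together with compactness of $\tilde\Theta$ and positivity of $\pi$) closes the estimate uniformly in $k$ and $\theta$. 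With that correction the sketch goes through.
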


Here, we provide the following new result:
\begin{thm}
    Consider the assumptions of Theorem \ref{thm:JDS} and suppose that Assumptions \labelcref{assu:infeighes,assu:gausstailSk} stand for every compact subset $\tilde\Theta$ of $\Theta$. %
    The Jeffreys prior is a $D_f$-reference prior over class $\cP$ with rate $k^{d\beta/2}$. It is the unique reference prior over the class $\cP^\ast=\{\pi\in\cP,\,\pi>0\}$.
\end{thm}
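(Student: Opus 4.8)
The plan is to reproduce, in the regime $\beta\in(-1,0)$, the argument used for Theorem~\ref{thm:mainfinal}, with Theorem~\ref{thm:JDS} playing the role of the compact-set building block. Everything hinges on one structural remark: since the Jeffreys prior is proportional to $|\cI(\theta)|^{1/2}$, its renormalized restriction to any compact subset $\tilde\Theta\subset\Theta$ is again the Jeffreys prior of $\tilde\Theta$. Under Assumptions~\ref{assu:lgolikelihood} and~\ref{assu:fisher} the map $\theta\mapsto|\cI(\theta)|^{1/2}$ is continuous and strictly positive, so $J$ has a continuous positive Lebesgue density; in particular $J\in\cP^\ast\subset\cP$, which makes the statement well posed.

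For existence I would fix an increasing sequence of compact subsets $(\Theta_i)_{i\in I}$ with $\Theta_i\subset\Theta$ and $\bigcup_{i\in I}\Theta_i=\Theta$, which exists because $\Theta$ is open, and note that $J(\Theta_i)>0$ by positivity. The assumption that Assumptions~\ref{assu:infeighes} and~\ref{assu:gausstailSk} hold on every compact subset lets Theorem~\ref{thm:JDS} apply with $\tilde\Theta=\Theta_i$. Since the renormalized restriction $J_i$ coincides with the Jeffreys prior of $\Theta_i$, the optimality clause of Theorem~\ref{thm:JDS}, in force under its sign condition $\alpha(\beta+1)>0$ (equivalently $\alpha>0$ here), gives for the renormalized restriction $\pi_i$ of any $\pi\in\cP_i$ (see the final paragraph for competitors that vanish somewhere)
\begin{equation}
    \lim_{k\rightarrow\infty}k^{d\beta/2}\bigl(I_{D_f}(J_i|k)-I_{D_f}(\pi_i|k)\bigr)\geq0 .
\end{equation}
This is exactly the requirement of Definition~\ref{def:genref} for the positive, monotone rate $\varphi(k)=k^{d\beta/2}$, so $J$ is a $D_f$-reference prior over $\cP$.

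Uniqueness over $\cP^\ast$ then follows the proof of Theorem~\ref{thm:mainfinal}. Suppose $\pi^\ast\in\cP^\ast$ is a second $D_f$-reference prior, attached to an exhausting sequence of compacts $(\tilde\Theta_h)_{h\in H}$. On each $\tilde\Theta_h$, Theorem~\ref{thm:JDS} applied to $\pi^\ast$ gives $\lim_{k}k^{d\beta/2}(I_{D_f}(J_h|k)-I_{D_f}(\pi^\ast_h|k))\geq0$, where $J_h$ is the Jeffreys prior of $\tilde\Theta_h$, while the reference-prior property of $\pi^\ast$ tested against the competitor $J$ gives the reverse inequality. Hence the limit vanishes, and because $\pi^\ast_h$ is positive---this is where the restriction to $\cP^\ast$ is used---the equality case of Theorem~\ref{thm:JDS} forces $\pi^\ast_h=J_h$. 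As $\bigcup_{h\in H}\tilde\Theta_h=\Theta$, the two densities agree up to normalization on all of $\Theta$, so $\pi^\ast=J$.

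The main obstacle is not the patching but the handling of positivity. The convergence (\ref{eq:limitkbeta}) and the equality characterization of Theorem~\ref{thm:JDS} are proved only for strictly positive priors, because for $\beta<0$ the integrand behaves like $\alpha x^\beta$ and blows up near $0$, while the Laplace-type concentration of the posterior requires $\pi(\theta)>0$ at the reference point. For the existence step one therefore still has to control the rescaled information of a possibly vanishing competitor $\pi_i\in\cP_i$; here the concavity of $x\mapsto x^{1+\beta}$ (valid since $1+\beta\in(0,1)$) makes the limiting functional $l$ concave with maximum at $J$, so that $l(\pi_i)\leq l(J_i)$ with the convention $0^{1+\beta}=0$, and it is enough to bound the rescaled information from above by $l(\pi_i)$. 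For the uniqueness step positivity is genuinely indispensable, since the strict equality case $\pi=J$ can fail to detect a competitor that vanishes on a set of positive measure; this is exactly why the sharp conclusion is confined to $\cP^\ast$.
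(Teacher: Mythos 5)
Your overall architecture (patch together the compact-set result of Theorem~\ref{thm:JDS}, then run the Theorem~\ref{thm:mainfinal}-style argument for uniqueness over $\cP^\ast$) is the same as the paper's, and your uniqueness paragraph is essentially the paper's. The problem is the existence step, and you have correctly located the difficulty but not resolved it. Theorem~\ref{thm:JDS} gives the limit $k^{d\beta/2}I_{D_f}(\pi_i|k)\to l(\pi_i)$ and the comparison with $J_i$ only for priors with \emph{positive} continuous density on the compact; for a competitor $\pi\in\cP_i$ whose density vanishes somewhere on $\Theta_i$, your final paragraph reduces the matter to the claim that the rescaled information is bounded above by $l(\pi_i)$ (with $0^{1+\beta}=0$), but you give no argument for that bound, and it is exactly what is \emph{not} available: for $\beta<0$ the pointwise Laplace analysis degenerates where $\pi(\theta)$ is small, and the equi-integrability/domination lemmas of the paper are stated for the $\beta>0$ regime, not here. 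The Jensen step $l(\pi_i)\le l(J_i)$ is fine but irrelevant until that missing upper bound is supplied. So as written the existence part only covers competitors in $\cP^\ast$.

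The paper closes this gap with a different, purely set-theoretic device rather than an analytic estimate: it takes compacts of the form $\Theta_{i,j}=K_i\cap(\{\pi\geq 1/j\}\cup\{\pi=0\})$, which are compact (intersection of a compact with a union of two closed sets) and exhaust $\Theta$. On such a set the mutual information of the renormalized restriction $\pi_{i,j}$ coincides with that of the restriction $\hat\pi_{i,j}$ to $\hat\Theta_{i,j}=K_i\cap\{\pi\geq1/j\}$, because the region $\{\pi=0\}$ contributes neither to the marginal $p_{\mbf Y}$ nor to the outer integral; and on $\hat\Theta_{i,j}$ the density is bounded below by $1/j$, so Theorem~\ref{thm:JDS} applies directly. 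One then passes from the Jeffreys prior of $\hat\Theta_{i,j}$ to that of $\Theta_{i,j}$ via $\lim_k k^{d\beta/2}I_{D_f}(\hat J_{i,j}|k)=C_\beta\alpha(\int_{\hat\Theta_{i,j}}|\cI|^{1/2})^{-\beta}\le C_\beta\alpha(\int_{\Theta_{i,j}}|\cI|^{1/2})^{-\beta}$, using that $x\mapsto\alpha x^{-\beta}$ is increasing when $\alpha>0$ and $\beta<0$. Note that this construction makes the compacts depend on the competitor $\pi$, whereas your fixed exhausting sequence does not; if you insist on a $\pi$-independent sequence as in your first paragraph, you are forced back onto the unproven analytic bound. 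Either adopt the paper's choice of compacts or actually prove the upper bound $\limsup_k k^{d\beta/2}I_{D_f}(\pi_i|k)\le l(\pi_i)$ for vanishing densities; without one of the two, the existence claim over all of $\cP$ is not established.
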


\begin{proof}
    Consider a subset $\Theta\subset\RR^d$ and $\pi$ a prior with continuous density on $\Theta$. %

Let $\bigcup_{i\in I} K_i=\RR^d$  be a a compact recovering of the space and denote $\Theta_{i,j}=K_i\cap(\{\pi\geq 1/j\}\cup\{\pi=0\})$ for any $i\in I,\,j\in\NN^\ast$. Those are compact subsets of $\Theta$ such that $\bigcup_{i,j}\Theta_{i,j}=\Theta$.

Considering $\pi_{i,j}$ the restriction (with re-normalization) of the prior $\pi$ to $\Theta_{i,j}$, notice that $I_{D_f}(\pi_{i,j}|k)=I_{D_f}(\hat\pi_{i,j}|k)$ with $\hat\pi_{i,j}$ being the restriction of $\pi$ to $\hat\Theta_{i,j}=K_i\cap\{\pi\geq 1/j\}$. As $\hat\pi_{i,j}$ is a positive and continuous prior on the compact $\hat\Theta_{i,j}$ one may apply Theorem \ref{thm:JDS} to obtain
\begin{equation}
    \lim_{k\rightarrow\infty} k^{d\beta/2}(I_{D_f}(\hat J_{i,j}|k)-I_{D_f}(\hat\pi_{i,j})|k)\geq 0
\end{equation}
Remark
\begin{multline}
    \lim_{k\rightarrow\infty} k^{d\beta/2}I_{D_f}(\hat J_{i,j}|k) = C_\beta\alpha\left(\int_{\hat\Theta_{i,j}} |\cI(\theta)|^{1/2}d\theta \right)^{-\beta}\\
        \leq C_\beta\alpha\left(\int_{\Theta_{i,j}} |\cI(\theta)|^{1/2}d\theta \right)^{-\beta} = \lim_{k\rightarrow\infty} k^{d\beta/2}I_{D_f}(J_{i,j}|k)
\end{multline}
as a result of $x\mapsto\alpha x^{-\beta}$ being an increasing function as assumed.
Eventually, we obtain
\begin{equation}
    \lim_{k\rightarrow\infty} k^{d\beta/2}(I_{D_f}(J_{i,j}|k)-I_{D_f}(\pi_{i,j}|k))\geq 0.
\end{equation}
This being true for any couple $(i,j)\in I\times\NN^\ast$, this makes the Jeffreys prior a $D_f$-reference prior over $\cP$. %

Concerning the class $\cP^\ast$, any prior in $\cP^\ast$ satisfies the assumptions of Theorem \ref{thm:JDS} on every compact subset of $\Theta$.
The $D_f$-reference characteristic of Jeffreys prior and its uniqueness are a direct consequence of that statement, analogous to the proof of Theorem \ref{thm:mainfinal}.
\end{proof}

\end{appendix}
\begin{acks}[Acknowledgments]
 The author would like to thank his PhD advisors Professor Josselin Garnier (CMAP, CNRS, École polytechnique, Institut Polytechnique de Paris), Cyril Feau (Université Paris-Saclay, CEA, Service d'Études Mécaniques et Thermiques), and Clément Gauchy  (Université Paris-Saclay, CEA, Service de Génie Logiciel pour la Simulation) for their guidance and support.
\end{acks}

\section*{Conflict of interest}

The author reports there are no competing interests to declare.

\bibliographystyle{i-sart-nameyear} %
\bibliography{bibliography}       %

\end{document}